\newcommand*{\Scale}[2][4]{\scalebox{#1}{$#2$}}%
\newtheorem{thm}{Theorem}
\newtheorem{lemm}{Lemma}[section]
\newtheorem{defi}[lemm]{Definition}
\newtheorem{prop}[lemm]{Proposition}
\newtheorem{remm}[lemm]{Remmark}
\newcommand{\Ccal}{\mathcal{C}}
\newcommand{\PP}{\mathbb{P}}
\newcommand{\Rd}{\mathbb{R}^d}
\newcommand{\R}{\mathbb{R}}
\newcommand{\N}{\mathbb{N}}
\newcommand{\T}{\mathbb{T}}
\newcommand{\D}{\mathcal{D}}
\title{Convergence Rate for Moderate Interaction  Particles and Application to Mean Field Games}
\author{Josu\'e Knorst, Christian Olivera and Alexandre B. de Souza}
\date{}
\begin{document}
\maketitle
\noindent \textit{ {\bf Keywords and phrases:} 
 Moderate interaction, Optimal control,
Mean-field type game, Fokker-Planck equations.}

\vspace{0.3cm} \noindent {\bf MSC2010 subject classification:} 49N90,  60H30, 60K35.

\begin{abstract}
We study  two interacting particle systems, both modeled as a system of  $N$ stochastic differential equations driven by Brownian motions with singular kernels and  moderate interaction. We  show  a quantitative result where the convergence rate depends on the moderate scaling parameter, the regularity of the solution of the limit equation and the dimension. Our approach is based on the techniques   of stochastic calculus, some  
 properties of  Besov and Triebel-Lizorkin space,  and the  semigroup approach introduced in \cite{FlandoliLeimbachOlivera}. \color{red}{New techniques are presented to address the difficulty arising from the nonlinear term.}
\end{abstract}

\section{Introduction and  results.}

\subsection{Fokker-Planck equations with singular kernel.}

In the first part of this paper, we consider  the stochastic approximation of nonlinear Fokker-Planck Partial Differential Equations (PDEs) of the form
\begin{equation}\label{eq:PDE}
\begin{cases}
&\partial_t p(t,x) = \Delta p(t,x) - \nabla \cdot \big(p(t,x) ~  K \ast_{x} p (t,x) \big),\quad t>0,~x\in\T^d,\\
&p(0,x) = p_0(x), 
\end{cases}
\end{equation}
by means of moderately interacting particle systems. The main interest here is kernels $K$ with a singularity at the origin. {\color{red}{Equation (\ref{eq:PDE}) is   attracting the attention of
a large scientific community, with a large amount of publications in the literature. Accordingly the nature of the interaction kernel can lead to diverse phenomena, including  classical models such as the $2d$ Navier-Stokes equation, which in vorticity form can be written as in \eqref{eq:PDE} with the Biot-Savart kernel,  and the parabolic-elliptic Keller-Segel PDE in any dimension $d\geq 1$, which models the phenomenon of chemotaxis. See   \cite{Biler}, \cite{BLanchet}, \cite{Bogachev2},  \cite{Burger2} and  \cite{Meajda} for further details of the mentioned models.}}

Here, we establish a rigorous approximation  of the 
equation \eqref{eq:PDE} from an interacting 
stochastic particle system. More precisely, we  consider the following $N$-particle dynamics described, for each $N\in\mathbb{N}$, by the system coupled stochastic differential equations in $\mathbb{T}^{d}$:

\begin{equation}
dX_{t}^{i,N}=  \frac{1}{N}\sum_{k=1}^{N} (K\ast V^{N})(X_{t}^{i,N} -X_{t}^{k,N}) \; dt + \sqrt{2} \; dW_{t}^{i} \label{itoass}
\end{equation}

  \noindent $\{W_{t}^{i}, \; i\in  \N \}$ is a family of independent standard Brownian motions on $\T^d$ defined on a filtered probability space $\left(\Omega,\mathcal{F},\mathcal{F}_{t},\mathbb{P}\right)  $. The \emph{interaction potential} $V^N: \T^d \rightarrow \R_+$ is differentiable and will be specified later.

We now define the \emph{empirical process}
\[S_{t}^{N}:=\frac{1}{N} \sum_{i=1}^{N}\delta_{X_{t}^{i,N}} \] which is the (scalar) measure-valued
process associated to the {\color{red}$\T^d$-valued} processes $\{t\mapsto X_{t}^{i,N}\}_{i=1,...,N}$. Above,
$\delta_{a}$ is the delta Dirac measure concentrated at {\color{red}$a \in \T^d$}. For any test function $\phi: \T^d \to \R$, we use the standard notation \[\langle S_t^N,\phi \rangle := \frac{1}{N}\sum_{i=1}^N \phi(X_t^{i,N}). \] In what follows $\mu \ast \nu$ stands for the standard convolution between two measures $\mu$ and $\nu$. Our interest lies
in investigating the dynamical process
$t\mapsto S_{t}^{N}$ in the limit $N\rightarrow\infty$.

If one takes in \eqref{itoass} an interaction potential $V^N \approx V$ which does not depend on the parameter $N$, one expects a non-local term of
the form {\color{red}$V\ast p$} in the limiting PDE; this is the \emph{mean field regime}. In order
to get a \emph{local} term, it is necessary to rescale the potential $V$. This is why we assume that the sequence $V^{N}$  concentrates as
$N\rightarrow\infty$: following the proposal of  K. Oelschl\"{a}ger  \cite{Oelschlager85}, see also \cite{JourdainMeleard} and \cite{Meleard}, we suppose that, for any $x \in \mathbb \T^d$,

\begin{equation}\label{eq:VN}
V^{N}(x)=N^{\beta}V(N^{\frac{\beta}{d}}x), \qquad \text{for some } \beta \in [0,1],
\end{equation}

\noindent where $V:\T^d \to \R_+$ is a smooth probability density. The case $\beta = 0$ is the \emph{mean field} one (\textit{i.e.}~long range interaction), the
case $\beta = 1$ corresponds to \emph{local} (\textit{i.e.}~nearest neighbor) interactions, while the
case $0 < \beta < 1$ corresponds to an intermediate regime, called \emph{moderate}
by  \cite{Oelschlager85}. Moderately interacting many-particle systems are essentially characterized by the feature that the range of the different types of interactions between the particles is
both large in comparison to the typical distance between neighboring particles and
small in comparison to the spatial size of the whole system.
 A further discussion of this limiting concept can be found in \cite{Oelschlager87}.
About more advances in particle systems, moderate and mean-field limits see, for instance, \cite{BURGER}, \cite{Ansgar}, \cite{Chen}, \cite{correa},  \cite{FlandoliLeimbachOlivera}, \cite{Ansgar2},\cite{Pisa} and \cite{Steve}.

In this paper, we quantify  the distance (of the trajectory ) between the empirical measure and the solution of (\ref{eq:PDE}) ; see Theorem  \ref{th:rate2}.  The proof is based in semigroup approach introduced in \cite{FlandoliLeimbachOlivera}, with applications to a large class of PDE,  see    \cite{FlandoliLeocata},  \cite{FlandoliLeocataRicci0}, \cite{Leocata}, \cite{FlandoliOliveraSimon} \cite{Luo},  \cite{Pisa} and \cite{Simon}. This approach  was greatly improved in \cite{Pisa} and \cite{ORT} for singularly
interacting particle systems,  providing quantitative convergence rates.
The novelty in our analysis, compared to \cite{Pisa} and \cite{ORT}, lies in the analysis of the convergence (of the trajectory) in Besov and Triebel-Lizorkin spaces. {\color{red} Moreover we quantify the convergence of particle systems with H\"older drift and apply this for the convergence of the mean field games. This  requires some new technical ingredients. The approach to prove these results is to write the mild equation satisfied by $p^N_t=V^N \ast S^N_t$ and then write the difference $p-p^N$, for the mild solution $p$ of \eqref{eq:PDE}, then we introduced  appropriate stopping times
and use some the semigroup properties in Besov and Triebel-Lizorkin space in order to get our results}.

\subsection{Fokker-Planck equations with H\"older drift term.}

In the second part of this work, we consider  the stochastic approximation of nonlinear Fokker-Planck Partial Differential Equations (PDEs) of the form
\begin{equation}\label{eq:PDE2}
\begin{cases}
&\partial_t p(t,x) = \Delta p(t,x) - {\color{red}\nabla \cdot \big( p(t,x)(\alpha(t,x) +  b(x,p)) \big)},\quad t>0,~x\in\R^d,\\
&p(0,x) = p_0(x), 
\end{cases}
\end{equation}
The main motivation  to consider this class of  PDE is to approximate 
the  Mean Field Game (MFG) system of PDEs by stochastic particle systems, see section \ref{MFG}.  

We  consider the following $N$-particle dynamics described, for each $N\in\mathbb{N}$, by the system of coupled stochastic differential equations in $\mathbb{R}^{d}$:

\begin{align}\label{SDE}
dX_t^{i,N}= & \left(\alpha^{i,N}(t, X^{i,N}_t)+b\left(X_t^{i,N}, \frac{1}{N} \sum_{j=1}^N V^N\left(X_t^{i,N}-X_t^{j,N}\right)\right)\right) dt \nonumber \\
& + d W_t^{i,N}, \quad t \in[0, T], i=1,...,N,
\end{align}

\noindent where  $\alpha^{i,N}, \, i=1,...,N$  and $b$  are deterministic functions and $W^{1,N},..., W^{N,N}$ are independent d-dimensional Wiener processes defined on a filtered probability space $\left(\Omega,\mathcal{F},(\mathcal{F}_{t})_{t\geq 0},\PP\right)$.

Our aim is the study of the asymptotic as $N\rightarrow\infty$ of  the time evolution of the whole particle system. Therefore, we investigate the empirical processes

\begin{eqnarray}\label{empp}
S_{t}^{N}:=\frac{1}{N}\sum_{k=1}^{N}\delta_{X_{t}^{k,N}},
\end{eqnarray}

\noindent where $\delta_{a}$  denotes the
Dirac measure at {\color{red}$a \in \mathbb{R}^d$}. The measure $S_{t}^{N}$ determines the distribution of the positions in the $N$-th system. Our contribution is to obtain the convergence rate
of the empirical measure distribution and the limit model  (\ref{eq:PDE2}). The proof of the main theorem, see Theorem \ref{mainthm}, is based on the semigroup mentioned in the previous session.

\subsection{Notations.}

\begin{itemize}

\item We use $\D$ to denote either $\T^d$ or $\R^d$.

\item Whenever we consider a function on $\T^d$, we associate it with its periodic extension, which is a function on $\R^d$.

 \item In this paper, $q\ge 1$ and $(e^{t\Delta })_{t\geq 0 }$ denotes the semigroup of the heat operator on $\D$. 
 That is, for $f \in {L}^q(\D)$, 
\begin{equation*}
e^{t \Delta}f (x)  
 = g_{2t}^{(\D)} \ast_{\D} f(x),
\end{equation*}
where $\ast_{\D}$ denotes the convolution on $\D$ and for any $t>0$, $g_{t}^{(\D)}$ is heat kernel given by
\begin{equation*}
g_{t}^{(\D)}(x) =
\begin{cases}
&\displaystyle\frac{1}{(2\pi t)^{d/2}} \sum_{k\in \mathbb{Z}^{d}} e^{-\frac{| x-k |^{2}}{2t}} ~\mbox{on } \T^d,\\
&\displaystyle\frac{1}{(2\pi t)^{d/2}} e^{-\frac{|x|^{2}}{2t}}~\mbox{on } \R^d.
\end{cases}
\end{equation*}
{\color{red}Also, we recall for $\lambda \in \mathbb{R}$, (see e.g. \cite{Pisa}, p.11)

\begin{equation} \label{potential bessel operator estimate}
 \|(I-\Delta)^{\frac{\lambda}{2}} e^{t\Delta}\|_{L^q(\D)\to L^q(\D)} \leq C t^{-\frac{\lambda}{2}}.
\end{equation}}

\item If $u$ is a function or stochastic process defined on $[0,T]\times \D$, we will most of the time use the notation $u_{t}$ to denote the mapping  $x\mapsto u(t,x)$.

\item Depending on the context, the brackets $\langle\cdot , \cdot \rangle$ will denote either the scalar product in some $L^2$ space or the duality bracket between a measure and a function.

\item For $\mathcal{X}$ some normed vector space, the space $\Ccal([0,T];\mathcal{X})$ of continuous functions from the time interval $[0,T]$ with values in $\mathcal{X}$ is classically endowed with the norm
\begin{align*}
\|f\|_{T,\mathcal{X}} = \sup_{s\in [0,T]} \|f_{s}\|_{\mathcal{X}}.
\end{align*}

\end{itemize}

\paragraph{Bessel spaces.} We now briefly introduce \emph{Bessel potential spaces} on $\R^d$.

\begin{itemize}
\item For any  $\lambda\in \R$ and $q\geq1$,  we denote by
$H^{\lambda}_{q}(\mathbb{R}^{d})$ the space
\[H_{q}^{\lambda}(\mathbb{R}^{d}):= \Big\{ u \text{ tempered distribution; }  \, \mathcal{F}^{-1}\Big( \big(1+|\, \cdot\, |^{2}\big)^{\frac{\lambda}{2}}\; \mathcal{F} u(\cdot) \Big) \in  L^q(\mathbb R^d)\Big\},  \]
where $\mathcal Fu$ denotes the \emph{Fourier transform} of $u$. This space is endowed with the norm
 \begin{equation*}
 \| u \|_{\lambda,q} = \Big\| \mathcal{F}^{-1}\big((1+|\cdot|^{2})^{\frac{\lambda}{2}
}\; \mathcal{F} u(\cdot) \big) \Big\|_{L^q(\R^d)}. 
 \end{equation*}
The space $H_{q}^\lambda(\R^d)$ is 
associated to the Bessel potential operator {\color{red}$(I-\Delta)^\frac{\lambda}{2}$} defined as (see e.g. \cite[p.180]{Triebel} for more details on this operator):
\begin{align*}%
(I-\Delta)^\frac{\lambda}{2} u := \mathcal{F}^{-1}\left((1+|\cdot|^2)^{\frac{\lambda}{2}} \mathcal{F}u \right) .
\end{align*}

\end{itemize}
{\color{red}We recall the following Sobolev embedding, see \cite{Triebel} p.203: for $\lambda > \frac{d}{q}$ and $u \in H^{\lambda}_q$
\begin{align} \label{bessel holder emb}
\left\|u\right\|_{\lambda -\frac{d}{q}} \leq C \left\|u\right\|_{\lambda,q}, 
	\end{align}
where $\left\|\cdot\right\|_{\gamma}$ denotes the norm of a H\"older continuous function.}

{\color{red}Also by definition of Bessel potential operator and inequality $(\ref{potential bessel operator estimate})$, it holds for any $u \in L^q$ and $\lambda \in \mathbb{R}$, 
\begin{equation}\label{Semi}
 \|\nabla e^{t\Delta} u\|_{\lambda,q} \le C t^{-\frac{1+\lambda}{2}}\|u\|_{L^q}.
\end{equation}}

\paragraph{ Triebel-Lizorkin and Besov Space.} 

Let us first define a dyadic partition of unity as follows: we consider two 
$C_{0}^{\infty}(\R^{d})$-functions $\chi$ 
 and $\varphi$  which take values in $[0,1]$ and satisfy the following: there exists $\Lambda \in (1,\sqrt 2)$ such that
\[
\mathrm{Supp}\; \chi=\left\{   |\xi |\leq\Lambda  \right\} \qquad \text{and} \qquad \mathrm{Supp}  \ \varphi=\left\{  {\Lambda}^{-1} \leq|\xi|\leq\Lambda  \right\}.
\]
Moreover, with the following notations,  
\[
\varphi_{-1}(\xi):=\chi(\xi), \qquad
\varphi_{i}(\xi):=\varphi(2^{-i}\xi), \ \text{for any } i\geq 0.
\]
The sequence $\{\varphi_i\}$ satisfies
\begin{align*}
& \mathrm{Supp} \ \varphi_{i}\cap \mathrm{Supp} \ \varphi_{j}=\emptyset	 \quad \text{ if } \ |i-j|>1,  \\
& \sum_{i\geq-1} \varphi_{i}(\xi)=1, \quad \text{ for any } 
\xi \in\R^{d}.
\end{align*}

 Fix a dyadic partition of unity $\{ \varphi_{i} \}$ with its inverse Fourier transforms 
$  \{  \check{\varphi}_{i}   \}$. 
For $u\in \mathcal{S}'(\R^{d})$, the nonhomogeneous dyadic  blocks are defined  as
\[
{\color{red}\varphi_i(D)\equiv 0,} \quad \text{ if } i< -1, \qquad \text{and} \qquad 
{\color{red}\varphi_i(D)u=\check{\varphi}_{i}\ast u,} \quad \text{ if  }  i\geq -1.
\]
The partial sum of dyadic blocks is defined as a nonhomogeneous low frequency cut-off
operator:
{\color{red}\[
S_{j}:=\sum_{i\leq j-1} \varphi_i(D). 
\] }

Having this smooth resolution of unity, we are able 
to introduce the Triebel-Lizorkin and Besov spaces.

\begin{defi}  Let $s\in \R$ and $1< r <\infty$. 
 
\begin{enumerate}

\item  If $1< q< \infty$, then

\[
F_{q,r}^{s}= \left\{  f\in \mathcal{S}^{\prime}(\R^{d}) : \    \big\| \| (2^{js} {\color{red}\varphi_{j}(D)}f)_{j \in \mathbb{Z} } \|_{l^r(\mathbb{Z})}  \big\|_{\mathit{L}^{q}(\R^{d})}< \infty \right\}
\]

\item If $1< q < \infty$, then

\[
B_{q,r}^{s}= \left\{  f\in \mathcal{S}^{\prime}(\R^{d}) : \  \big\| (2^{js}\| {\color{red}\varphi_{j}(D)}f \|_{L^q} )_{j \in \mathbb{Z} } \big\|_{\mathit{l}^{r}(\mathbb{Z})}< \infty \right\}
\]
\end{enumerate}

\end{defi}

The spaces $F_{q,r}^{s}$  and $B_{q,r}^{s}$ are independent of the chosen dyadic partition, see \cite{Triebel}. 
In a similar way, we can define the  Triebel-Lizorkin and Besov space  on $\T^d$, see \cite{SchmeisserTriebel}. {\color{red} Our first result comprehends a convergence which holds in both of these spaces, thus in what follows we denote by $E_{q,r}^{s}$ either $F_{q,r}^{s}$ or $B_{q,r}^{s}$. Noticing that the duals of these spaces are $F_{q^\prime,r^\prime}^{-s}$ and $B_{q^\prime,r^\prime}^{-s}$, where $q^\prime, r^\prime$ are the conjugate indices of $q,r$, (see \cite{Sawano} p.257) we also use $E^{-s}_{q^\prime,r^\prime}$ to denote either of them.
In this notation, we have for $u \in E^{\lambda}_{q,r}$ (see \cite{Sawano} p.644 Theorem 5.29),
\begin{equation} \label{grad heat besov-triebel}
 \|\nabla e^{t\Delta} u\|_{E^\lambda_{q,r}} \le C t^{-\frac{1}{2}}\|u\|_{E^\lambda_{q,r}}.
\end{equation}}
{\color{red} We have  the  Sobolev embedding, see \cite{Triebel} p.203: for $\lambda > \frac{d}{q}$, $r \in (0,\infty)$ and $u \in E^{\lambda}_{q,r}$
\begin{align} \label{besov/triebel holder emb}
	\left\|u\right\|_{\lambda -\frac{d}{q}} \leq C \left\|u\right\|_{E^{\lambda}_{q,r}}, 
\end{align}
where $\left\|\cdot\right\|_{\gamma}$ denotes the norm of a H\"older continuous function.} {\color{red}Finally we recall the  following  Sobolev embedding, (see \cite{Triebel}, p.173, inclusions (4a) and (4b), and p.180, inclusion (9)): for $\lambda ,\delta >0$, $q',r'>1$ and $u \in E^{-\lambda}_{q',r'}$
	\begin{align}  \label{besov/triebel bessel emb}
	\left\|u\right\|_{E^{-\lambda}_{q',r'}} \leq C \left\|u\right\|_{H^{-\lambda+\delta}_{q'}}. 
	\end{align}
	}

\subsection{First main result.}

Solutions to \eqref{eq:PDE} will be understood in the following mild sense:
\begin{defi}\label{def:defMild}
	Given $K\in L^{q^\prime}(\T^{d}) $, $p_0 \in  L^{q}(\T^{d})$ with $1/q+ 1/{q\prime}=1$ and $T>0$,
	a function $p$ on {\color{red}$[0,T] \times \T^d$} is said to be a mild solution to (\ref{eq:PDE}) on $[0,T]$ if
	\begin{enumerate}
		\item $p\in C([0,T]; L^{q}(\T^{d}) ) $; 
		\item $p$ satisfies the integral equation
		\begin{equation}
			\label{eq:mildKS}
			p_{t} =  e^{t\Delta} p_0 -  \int_0^t \nabla \cdot ( e^{(t-s)\Delta }  (p_{s}\,  K \ast p_{s}) )\, ds, \quad 0 \leq t \leq T.
		\end{equation}
	\end{enumerate}
	A function $p$ on $[0,\infty) \times \T^d$ is said to be a global mild solution to \eqref{eq:PDE} if it is a mild solution to \eqref{eq:PDE} on $[0,T]$ for all $T>0$.
\end{defi}

Our first main result is the following claim, whose proof is detailed in Section \ref{subsec:proofRate2}. {\color{red} Recall that $p^N_t=V^N \ast S^N_t$, $t \in[0,T]$.}

\begin{thm}\label{th:rate2}  
	{\color{red}Assume that $K \in L^{q^\prime}(\mathbb{T}^d)$ and there exists $C_K >0$ such that
		\[ \|K \ast f\|_{E^{\lambda}_{q,r}} \le C_K \|f\|_{L^q}    \] \noindent for any $f\in L^{q}(\T^{d}) $ with $q>d/\lambda $ and $\lambda >0$. Let $T_{\max}$ be the maximal existence time for \eqref{eq:PDE}, fix $T\in(0,T_{\max})$ and
		take $\delta>0$ such that $\rho >0$, where
		\begin{align}\label{eq:def_rho}
			\rho =  \min \left\{\frac{\beta}{d}\Big(\lambda-\frac{d}{q}\Big),\, \frac{1}{2} - \frac{\beta}{2}\Big(1 +2 \Big(\frac{1}{2}-\frac{1}{q}\Big)\vee0\Big) \right\}-\delta,
		\end{align}
		
		In addition, let the dynamics of the particle system be given by \eqref{itoass} and assume 
		\begin{align}
			{\color{red} \lim_{N \to \infty} N^{\rho} \, \| p^N_{0}-p_{0} \|_{L^{q}(\T^{d})}=0, \,\, \mathbb{P}\,\text{-a.s}.} 
		\end{align}

		and

		\begin{align}\label{initial}
			\lim_{N \to \infty} N^{\rho}\,  \| S^N_{0}-p_{0} \|_{E_{q^\prime,r^\prime}^{-\lambda}}=0, \,\, \mathbb{P}\,\text{-a.s},
		\end{align}
		with \( q' \) and \( r' \) as the conjugate indices of \( q \) and \( r \), respectively.

		Then, we have

		\begin{align}\label{rate_moll}
			\lim_{N \to \infty} N^{\rho}\sup_{t \in [0,T]} \| p^N_{t}-p_{t} \|_{L^{q}(\T^{d})}=0, \,\, \mathbb{P}\,\text{-a.s}.
		\end{align}

		and

		\begin{align}\label{rate_true}
			\lim_{N \to \infty}N^{\rho}\sup_{t \in [0,T]} \| S^N_{t}-p_{t} \|_{E_{q^\prime,r^\prime}^{-\lambda}}=0, \,\, \mathbb{P}\,\text{-a.s}.
	\end{align}}

\end{thm}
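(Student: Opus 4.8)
The plan is to follow the semigroup strategy of \cite{FlandoliLeimbachOlivera}, \cite{Pisa} and \cite{ORT}, controlling the difference $w_t := p_t^N - p_t$ in $C([0,T];L^q(\mathbb{T}^d))$ through the mild formulation, a stopping-time device and a singular Gronwall estimate, and only afterwards passing to the statement \eqref{rate_true}. First I would apply It\^o's formula to $\phi(X_t^{i,N})$, sum over $i$ and convolve the resulting distributional equation for $S_t^N$ with $V^N$. Using the identity $(K\ast V^N)\ast S_t^N = K\ast p_t^N$ (which holds since $p_t^N=V^N\ast S_t^N$), this produces the mild equation
\begin{equation*}
p_t^N = e^{t\Delta}p_0^N - \int_0^t \nabla\cdot\big(e^{(t-s)\Delta}[V^N\ast(S_s^N\,(K\ast p_s^N))]\big)\,ds + R_t^N,
\end{equation*}
where $R_t^N$ is the stochastic convolution of the martingale part coming from the Brownian noise. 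Subtracting \eqref{eq:mildKS} and splitting the nonlinearity as
\begin{equation*}
V^N\ast(S_s^N(K\ast p_s^N)) - p_s(K\ast p_s) = A_s^N + p_s^N(K\ast w_s) + w_s(K\ast p_s),
\end{equation*}
with the commutator error $A_s^N := V^N\ast(S_s^N(K\ast p_s^N)) - p_s^N(K\ast p_s^N)$, I obtain a mild equation that is linear in $w$ with forcing $e^{t\Delta}(p_0^N-p_0)+R_t^N$ and error $A^N$.

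The core of the argument is the estimate of the two error terms, which generate the two competing rates in \eqref{eq:def_rho}. For the commutator I would write it pointwise as
\begin{equation*}
A_s^N(x) = \int_{\mathbb{T}^d} V^N(x-y)\big[(K\ast p_s^N)(y) - (K\ast p_s^N)(x)\big]\,S_s^N(dy),
\end{equation*}
use the hypothesis on $K$ together with the Sobolev embedding \eqref{besov/triebel holder emb} to get $\|K\ast p_s^N\|_{\gamma}\le C_K\|p_s^N\|_{L^q}$ with $\gamma=\lambda-d/q$, and exploit the mollifier scaling $V^N(x-y)\,|x-y|^\gamma \le C\,N^{-\frac{\beta}{d}\gamma}\,V^N(x-y)$ on the support of $V^N$. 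Since $S_s^N\ge 0$, this yields $\|A_s^N\|_{L^q}\le C\,N^{-\frac{\beta}{d}(\lambda-\frac{d}{q})}\|p_s^N\|_{L^q}^2$, the origin of the first term in $\rho$. For the stochastic convolution I would bound high moments of $\sup_{t\le T}\|R_t^N\|_{L^q}$ using the Burkholder--Davis--Gundy inequality, the gradient heat bound \eqref{Semi}, and the scaling $\|V^N\|_{L^p}\sim N^{\beta(1-1/p)}$; combined with the $N^{-1/2}$ fluctuation factor this gives $\mathbb{E}\sup_{t\le T}\|R_t^N\|_{L^q}^m\le C\,N^{-m(\frac12 - \frac{\beta}{2}(1+2(\frac12-\frac1q)\vee0))}$ for all large $m$, whence $N^\rho\sup_t\|R_t^N\|_{L^q}\to 0$ almost surely by Borel--Cantelli, producing the second term in $\rho$.

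To close the estimate I would introduce the stopping time $\tau_N := \inf\{t\ge 0: \|p_t^N\|_{L^q}\ge \sup_{s\le T}\|p_s\|_{L^q}+1\}\wedge T$. On $[0,\tau_N]$ the quadratic terms become linear in $\|w\|_{L^q}$, the factor $\|K\ast w_s\|_{L^\infty}$ is controlled by $C_K\|w_s\|_{L^q}$ via the hypothesis and \eqref{besov/triebel holder emb}, and $\|\nabla e^{t\Delta}\cdot\|_{L^q}\le Ct^{-1/2}\|\cdot\|_{L^q}$ leads to
\begin{equation*}
\|w_t\|_{L^q}\le \varepsilon_N + C\int_0^t (t-s)^{-1/2}\|w_s\|_{L^q}\,ds, \qquad t\le T\wedge\tau_N,
\end{equation*}
with $\varepsilon_N := \|p_0^N-p_0\|_{L^q} + \sup_{s}\|R_s^N\|_{L^q} + C\,N^{-\frac{\beta}{d}(\lambda-\frac{d}{q})}$. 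The singular Gronwall (Henry) lemma gives $\sup_{t\le T\wedge\tau_N}\|w_t\|_{L^q}\le C\varepsilon_N$, and since $N^\rho\varepsilon_N\to 0$ almost surely, a continuity/bootstrap argument shows $\tau_N=T$ for all large $N$, which is \eqref{rate_moll}. For \eqref{rate_true} I would split $S_t^N-p_t=(S_t^N-p_t^N)+w_t$: the mollification defect is estimated by duality, $\|S_t^N-V^N\ast S_t^N\|_{E^{-\lambda}_{q',r'}}\le \sup_{\|\phi\|_{E^\lambda_{q,r}}\le1}\langle S_t^N,\phi-V^N\ast\phi\rangle\le C\,N^{-\frac{\beta}{d}(\lambda-\frac{d}{q})}$, using that $S_t^N$ is a probability measure and the Hölder bound on $\phi-V^N\ast\phi$, while $\|w_t\|_{E^{-\lambda}_{q',r'}}\le C\|w_t\|_{L^q}$ follows from \eqref{besov/triebel holder emb} and duality.

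The main obstacle I anticipate is precisely the nonlinear commutator term $A^N$: transferring the exact Hölder regularity $\gamma=\lambda-d/q$ from the structural hypothesis on $K$ into a quantitative $L^q$ bound with the sharp power $N^{-\frac{\beta}{d}(\lambda-\frac{d}{q})}$, and then making the stopping-time plus singular-Gronwall mechanism compatible with the pathwise control of $R^N$, so that the two separate rates fuse into the single exponent $\rho$ of \eqref{eq:def_rho}.
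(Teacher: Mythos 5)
Your proposal is correct. For the first statement \eqref{rate_moll} it is essentially the paper's own argument: the mild equation for $p^N$ from \cite{Pisa}, a commutator estimate converting the structural hypothesis on $K$, the embedding \eqref{besov/triebel holder emb} and the mollifier scaling into the rate $N^{-\frac{\beta}{d}(\lambda-\frac{d}{q})}$, the stochastic-convolution bound $N^{-\frac12+\frac{\beta}{2}(1+2(\frac12-\frac1q)\vee 0)+\delta}$ via moment estimates and Borel--Cantelli, and a stopping time combined with the singular Gr\"onwall lemma. The differences are cosmetic: the paper takes the H\"older seminorm of $K\ast p_s$ (deterministic, since $p\in C([0,T];L^q)$) rather than of $K\ast p^N_s$, so its commutator constant needs no a priori control of $\|p^N_s\|_{L^q}$, and it handles $K\ast p^N_s-K\ast p_s$ as a separate quadratic term; moreover its stopping time is $T^N=\inf\{t:\|p^N_t-p_t\|_{L^q}\ge N^{-\rho}\}\wedge T$ rather than your cutoff on $\|p^N_t\|_{L^q}$. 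Both devices linearize the quadratic terms and both contradiction arguments close.

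Where you genuinely depart from the paper is the second statement \eqref{rate_true}. The paper proves it by a \emph{second} Gr\"onwall argument in $E^{-\lambda}_{q',r'}$: it writes the weak--mild equation for $S^N_t-p_t$, controls the nonlinear error through the already-proved \eqref{rate_moll}, estimates the martingale in $H^{-\lambda+\delta}_{q'}$ (Proposition A.12 of \cite{Pisa} plus Borel--Cantelli), and applies Gr\"onwall starting from the initial datum --- which is precisely why hypothesis \eqref{initial} appears in the statement. You instead use the triangle inequality $S^N_t-p_t=(S^N_t-V^N\ast S^N_t)+(p^N_t-p_t)$, bounding the mollification defect deterministically and uniformly in $(t,\omega)$ by duality, $|\langle S^N_t,\phi-V^N\ast\phi\rangle|\le C N^{-\frac{\beta}{d}(\lambda-\frac{d}{q})}\|\phi\|_{E^{\lambda}_{q,r}}$, since $S^N_t$ is a probability measure and $\phi$ is $(\lambda-\tfrac dq)$-H\"older by \eqref{besov/triebel holder emb}, while $\|p^N_t-p_t\|_{E^{-\lambda}_{q',r'}}\le C\|p^N_t-p_t\|_{L^q}$ because $E^{\lambda}_{q,r}\hookrightarrow L^{\infty}\hookrightarrow L^{q'}$ on the torus. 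Since $\rho\le\frac{\beta}{d}(\lambda-\frac{d}{q})-\delta$, this yields \eqref{rate_true} at the same rate $\rho$. Your route is shorter --- it needs no second stochastic-convolution estimate in negative-order spaces --- and it shows hypothesis \eqref{initial} is actually superfluous for the conclusion. What the paper's longer route buys is independence from the mollification-defect bound: working intrinsically in $E^{-\lambda}_{q',r'}$ it would still function in a regime where the target rate is not dominated by $N^{-\frac{\beta}{d}(\lambda-\frac dq)}$, whereas your argument is capped by that defect; under the definition \eqref{eq:def_rho} of $\rho$ this cap is never active, so both deliver the theorem as stated.
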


{\color{red}
	\begin{remm} We observe by classical results that the  SDE (\ref{itoass}) is well-posed since for all $N$ we have $(V^{N}\ast K)\in \mathrm{C}_{b}^{\infty}( \T^d ) $. 
	\end{remm}
}

{\color{red}
	\begin{remm} We observe that when $K$  verifies $K\ast p \in L^{\infty}([0,T],E_{q,r}^{\lambda}) $ for any $p\in C([0,T]; L^{q}(\T^{d}) )$ with $q>d/\lambda $ and $\lambda >0$, it regularises enough to ensure that $K\ast p$ is H\"older continuous in $L^q$, with a H\"older coefficient that determines the convergence rate $\rho$. This is no longer the case when $K$ is a Dirac mass. Nevertheless, assuming initial conditions in $E_{q,r}^{\lambda}$, we are able to prove that $p$ is uniformly bounded in $E_{q,r}^{\lambda}$ and is therefore $(\lambda-d/q)$-H\"older continuous. This is done in Theorem \ref{mainthm}. 
	\end{remm}
}

{\color{red}
	\begin{remm} We note  the convergence (\ref{rate_true}) is deduced from (\ref{rate_moll}), the hypotheses \eqref{initial}
		and the convergence rate for the stochastic convolution in the space $E_{q^\prime,r^\prime}^{-\lambda}$.
	\end{remm}
}

\subsection{Second main result.}

{\color{red}In this section, let $q>d$.} Solutions to \eqref{eq:PDE2} will be understood in the following mild sense:
\begin{defi}\label{def:defMild2}
Given $p_0 \in  H^{\lambda}_{q}(\mathbb{R}^{d})$, $\lambda> \frac{d}{q} $,  $T>0$,
 a function $p$ on $[0,T] \times \R^d$ is said to be a mild solution to \eqref{eq:PDE2} on $[0,T]$ if
\begin{enumerate}
\item $p\in C([0,T]; H^{\lambda}_{q}(\mathbb{R}^{d})) $; 
\item $p$ satisfies the integral equation
\begin{equation}
\label{eq:mildKS2}
p_{t} =  e^{t\Delta} p_0 -  \int_0^t \nabla \cdot ( e^{(t-s)\Delta }   \big( p (\alpha(s,x) +  b(x,p)) \big)\, ds, \quad 0 \leq t \leq T.
\end{equation}
\end{enumerate}
A function $p$ on {\color{red}$[0,\infty) \times \R^d$} is said to be a global mild solution to \eqref{eq:PDE2} if it is a mild solution to \eqref{eq:PDE2} on $[0,T]$ for all $T>0$.
\end{defi}

\begin{enumerate}
    \item[(H1)]\label{H1}  $b:\R^d \times \R_+ \to \R^d$ is a Borel measurable function, continuous and such that there exist two constants $C, L>0$ for which it holds that
\begin{align*}
& |b(x, u)| \leq C, \\
& |b(x, u)-b(y, v)| \leq L(|x-y|+|u-v|) .
\end{align*}
for all $x, y \in \mathbb{R}^d, u,v \in \mathbb{R}_{+}$.

\item[(H2)]\label{H2}  $\alpha \in C([0,T], \mathrm{C}_{b}^{\eta} (\mathbb{R}^{d}; \mathbb{R}^d))$.

\item[(H3)]\label{H3}  $V \in \mathrm{C}_{c}^\infty(\mathbb{R}^d) \cap \mathcal{P}(\mathbb{R}^d)$.
\end{enumerate}
{\color{red}Recall that $p^N_t=V^N \ast S^N_t$, $t \in[0,T]$.}

\begin{thm}\label{mainthm}{\color{red} Let $q\ge 2$ and assume (H1)-(H3). Let $T_{\max}$ be the maximal existence time for \eqref{eq:PDE2}, fix $T\in(0,T_{\max})$ and 
take $\delta>0$ such that $\rho>0$, where
\begin{align*}
	\rho = \min \left\{\frac{\beta}{d}\Big(\eta \wedge \Big(\lambda-\frac{d}{q}\Big)\Big), \, \frac{1}{2} - \beta\Big(1+\frac{\lambda}{d}-\frac{1}{q}\Big)\right\}-\delta. 
\end{align*}
In addition, let the dynamics of the particle system be given by \eqref{SDE} with $\alpha^{i,N}=\alpha$ satisfying (H2) for $i=1,...,N$ and assume
\begin{align}
\lim_{N \to \infty}N^{\rho}\, \| p^N_{0}-p_{0} \|_{\lambda,q}=0, \, \mathbb{P}\,\text{-a.s.} 
\end{align}

Then, we have

\begin{align}
\lim_{N \to \infty}N^{\rho}\sup_{t \in [0,T]} \| p^N_{t}-p_{t} \|_{\lambda,q}=0, \, \mathbb{P}\,\text{-a.s.}
\end{align}}

\end{thm}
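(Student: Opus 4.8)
The plan is to adapt the semigroup comparison of Theorem~\ref{th:rate2} to the Bessel scale $H^\lambda_q(\R^d)$; the new feature is the genuinely nonlinear, merely Lipschitz drift $b(\cdot,p)$, which I would control through a stopping-time/Gronwall bootstrap. First I would derive the mild equation for $p^N_t=V^N\ast S^N_t$. Applying It\^o's formula to $x\mapsto V^N(x-X^{i,N}_t)$, averaging over $i$, and using $b(X^{i,N}_t,\tfrac1N\sum_jV^N(X^{i,N}_t-X^{j,N}_t))=b(X^{i,N}_t,p^N_t(X^{i,N}_t))$, one turns \eqref{SDE} into a stochastic PDE whose Duhamel form is
\[
p^N_t=e^{t\Delta}p^N_0-\int_0^t\nabla\cdot e^{(t-s)\Delta}\big(V^N\ast(S^N_s(\alpha_s+b(\cdot,p^N_s)))\big)\,ds+Z^N_t,
\]
where $Z^N_t=-\tfrac1N\sum_{i}\int_0^te^{(t-s)\Delta}\nabla V^N(\cdot-X^{i,N}_s)\cdot dW^{i,N}_s$ is the stochastic convolution.

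Subtracting \eqref{eq:mildKS2} and writing $u^N_t=p^N_t-p_t$, $g^N_s=\alpha_s+b(\cdot,p^N_s)$, $g_s=\alpha_s+b(\cdot,p_s)$, I would decompose
\[
V^N\ast(S^N_sg^N_s)-p_sg_s=R^N_s+u^N_sg^N_s+p_s\big(b(\cdot,p^N_s)-b(\cdot,p_s)\big),\quad R^N_s(x)=\int V^N(x-y)\big(g^N_s(y)-g^N_s(x)\big)\,dS^N_s(y),
\]
so that $u^N_t$ solves a closed integral equation driven by the initial error $e^{t\Delta}(p^N_0-p_0)$, the stochastic term $Z^N_t$, the commutator source $R^N_s$, and the feedback terms $u^N_sg^N_s$ and $p_s(b(\cdot,p^N_s)-b(\cdot,p_s))$.

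Since $p\in C([0,T];H^\lambda_q)$ with $\lambda>d/q$, the embedding \eqref{bessel holder emb} gives $\sup_t\|p_t\|_{\lambda-\frac dq}<\infty$, so $p$ is uniformly $(\lambda-\frac dq)$-H\"older in space, and with (H1)--(H2) the field $g$ (and $g^N$, once $p^N$ is controlled) is $(\eta\wedge(\lambda-\frac dq))$-H\"older. I would make this available through the stopping time $\tau_N=\inf\{t\le T:\|p^N_t\|_{\lambda,q}>\|p\|_{T,\lambda,q}+1\}$ (with $\inf\emptyset=T$), which is positive for large $N$ since $p^N_0\to p_0$ in $H^\lambda_q$. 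On $[0,\tau_N]$, estimating in $H^\lambda_q$ via $\|\nabla\cdot e^{(t-s)\Delta}f\|_{\lambda,q}\le C(t-s)^{-1/2}\|f\|_{\lambda,q}$ (from \eqref{Semi} and the commutation of $\nabla$, $(I-\Delta)^{\lambda/2}$, $e^{(t-s)\Delta}$), three estimates enter: the commutator, using that $V^N(x-\cdot)$ is supported in a ball of radius $\sim N^{-\beta/d}$ and $g^N_s$ is $(\eta\wedge(\lambda-\frac dq))$-H\"older, obeys $\|R^N_s\|\lesssim N^{-\frac\beta d(\eta\wedge(\lambda-\frac dq))}$ (the first term of $\rho$); a Burkholder--Davis--Gundy/factorisation bound with the scaling $\|V^N\|_{\lambda,q}\sim N^{\beta(1+\lambda/d-1/q)}$ against the $N^{-1/2}$ gain from $N$ independent martingales gives $\sup_t\|Z^N_t\|_{\lambda,q}\lesssim N^{-(\frac12-\beta(1+\lambda/d-1/q))}$ in probability, whence $N^\rho$ times it tends to $0$ almost surely by Borel--Cantelli (the $-\delta$ in $\rho$ providing the summability margin); and (H1) bounds the feedback terms by $C\int_0^t(t-s)^{-1/2}\|u^N_s\|_{\lambda,q}\,ds$. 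A singular Gronwall inequality then yields $\sup_{t\le\tau_N}\|u^N_t\|_{\lambda,q}\lesssim N^{-\rho}$, whence $\tau_N=T$ for large $N$, removing the stopping time and giving the claim.

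I expect the main obstacle to be the nonlinear feedback in $H^\lambda_q$. The drift $b(\cdot,p^N_t)$ couples the equation to the very norm being bounded, and $\alpha$ is only $\eta$-H\"older while the target space $H^\lambda_q$ may have $\lambda>\eta$, so the products $u^N_sg^N_s$ need not lie in $H^\lambda_q$ and the naive Banach-algebra estimate (available for $\lambda>d/q$) cannot be invoked. Making the feedback step rigorous thus demands a careful paraproduct/interpolation treatment that keeps the Lipschitz constant of (H1) in front of $\|u^N_s\|_{\lambda,q}$ while preserving an integrable time singularity; its interplay with the a priori bound secured by $\tau_N$ is the new technical ingredient, and the commutator estimate underlying the moderate rate $N^{-\frac\beta d(\eta\wedge(\lambda-d/q))}$ is the other delicate point.
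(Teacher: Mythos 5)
Your overall architecture (mild/Duhamel comparison, commutator term exploiting the moderate scaling, Borel--Cantelli for the stochastic convolution, stopping time plus singular Gr\"onwall bootstrap) is the same as the paper's, but there is a genuine gap at the single most important step, and you say so yourself: you estimate the Duhamel integrand via $\|\nabla\cdot e^{(t-s)\Delta }f\|_{\lambda,q}\le C(t-s)^{-1/2}\|f\|_{\lambda,q}$, which forces you to bound products such as $u^N_s g^N_s$ and $p_s\big(b(\cdot,p^N_s)-b(\cdot,p_s)\big)$ \emph{in} $H^{\lambda}_q$. Since $\alpha$ is only $\eta$-H\"older and $b(\cdot,p^N_s(\cdot))$ is at best $C^{\lambda-d/q}$, with both exponents possibly below $\lambda$, these drifts are not pointwise multipliers of $H^{\lambda}_q$, so ``(H1) bounds the feedback terms by $C\int_0^t(t-s)^{-1/2}\|u^N_s\|_{\lambda,q}\,ds$'' is not a valid step; the Gr\"onwall loop never closes. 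Flagging this as requiring ``a careful paraproduct/interpolation treatment'' leaves the core of the proof missing rather than supplying it.

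The idea that resolves this in the paper is to never measure the drift terms in $H^{\lambda}_q$ at all: one uses \eqref{Semi} in the form $\|\nabla e^{(t-s)\Delta }u\|_{\lambda,q}\le C (t-s)^{-\frac{1+\lambda}{2}}\|u\|_{L^q}$, i.e.\ the heat semigroup itself lifts $L^q$ to $H^{\lambda}_q$ at the price of the more singular (but still integrable, for $\lambda<1$) kernel $(t-s)^{-\frac{1+\lambda}{2}}$, to which the singular Gr\"onwall lemma (Proposition \ref{Gronwall}) still applies. All products are then estimated only in $L^q$, where boundedness of $b$ and $\alpha$, the Lipschitz property in (H1), and the embedding $H^{\lambda}_q\hookrightarrow L^\infty\cap C^{\lambda-d/q}$ (valid since $\lambda>d/q$) suffice: $\|fg\|_{q}\le\|f\|_{\infty}\|g\|_{q}$, with no paraproducts anywhere. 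A secondary difference: the paper takes the commutator relative to $b(\cdot,p_s(\cdot))$ and $\alpha_s$ (splitting off $b(\cdot,p^N_s)-b(\cdot,p_s)$ first, as in the terms (II.1) and (II.2.1)), so only the \emph{deterministic} H\"older norm of the limit $p$ enters the commutator rate $N^{-\frac{\beta}{d}(\eta\wedge(\lambda-d/q))}$, whereas your $R^N_s$ involves the H\"older norm of $g^N_s$ itself and thus leans on the stopping time even for this term; that part of your plan is workable but less clean. Your martingale heuristic and the final contradiction argument showing the stopping time equals $T$ for large $N$ match the paper.
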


{\color{red}
\begin{remm} We observe that the  SDE \eqref{SDE} is well-posed   because  for all $N$ we have    $V^{N}\in C_{b}^{\infty}( \R^d ) $, 
b is Lipschitz in both variables and $\alpha$ is a H\"older continuous function.  
\end{remm}
}

\subsection{Application to Mean Field Games}\label{MFG}

Mean Field Games (MFG) models were first proposed by Lasry and Lions \cite{Larsy} and simultaneously by Huang, Caines, and Malham\'e  \cite{Huang}. 
The field has grown considerably in the last decade, with research taking many different directions, from existence, uniqueness,  regularity and numerical analysis. MFG models aim to describe how populations of agents evolve over time due to their strategic interactions. 
The literature on MFGs is rapidly
growing, and the application of MFG theory is catching on in areas as diverse as
Economics, Biology, and Physics; hence, it is impossible to give
an exhaustive account of the activity on the topic. We refer the reader
to \cite{Carmona}, \cite{Carmona2}, and \cite{Porreta} for a comprehensive presentation of the MFG theory and its applications.

The equation (\ref{SDE}) says that each agent $i$ partially controls its velocity through the strategy vector $\alpha^{i,N}$. This model describes the situation where a single player interacts only with the few people in the surrounding environment, see \cite{Flandoli} and  \cite{Oelschlager87}.

The evolution of the players' positions depends on the strategy profile $\bm{\alpha} ^N=(\alpha^{1,N},...,\alpha^{N,N})$, where each $\alpha^{i,N} \in C_b([0,T], C_b^\eta(\Rd,\Rd))$ and on the initial distribution of states, which we assume is given by $\mu_0^{\otimes N}$, where the law $\mu_0$ has density $p_0$. Each player acts to minimize its own expected cost; more precisely, the player $i$ evaluates the performance of the strategy vector $\bm{\alpha}^N$ according to the cost functional
\begin{equation*}
    J_i^N(\bm{\alpha}^N)=\mathbb{E}\left[ \int_0^T\left(\frac{1}{2}\left|\alpha^{i,N}_s(X^{i,N}_s)\right|^2+f\big(X_s^{i,N}, p_s^N(X_s^{i,N})\big)\right) d s  + g(X^{i,N}_T) \right]
\end{equation*}
\noindent where $(X^{1,N},...,X^{N,N})$ is a solution of (\ref{SDE}) under $\bm{\alpha}^N$. An interesting concept of equilibrium for this optimization problem is that of $\varepsilon$-Nash equilibrium. Given $\varepsilon>0$, a strategy vector $\bm{\alpha}^N$ is called an $\varepsilon$-Nash equilibrium for the $N$-player game if for every $i=1,...,N$,
\[ J^N_i(\bm{\alpha}^N) \le J^N_i([\bm{\alpha}^{N,-i},\beta])+\varepsilon \]
\noindent for all $\beta \in C_b([0,T], C_b^\eta(\Rd,\Rd))$, where $[\bm{\alpha}^{N,-i},\beta]$ indicates the profile obtained from $\bm{\alpha}^N$ by replacing $\alpha^{i,N}$, the strategy of the $i$-th player, by $\beta$.

The existence of an $\epsilon$-Nash equilibrium {\color{red} for the moderate interaction regime} was addressed by Flandoli et al. in \cite{Flandoli}. The authors construct a sequence of approximate Nash equilibria for the $N$-player game by solving the corresponding control problem for one representative player. This can be justified as follows. Fixing one player, say $i$, the influence of the others appears only in the form of $p_s^N(X_s^{i,N}) = \frac{1}{N}\sum_{j=1}^N V^N(X^{i,N}_s-X^{j,N}_s)$. This quantity represents the density of particles in the neighborhood of particle $i$ and is replaced by $p_s(X_s^{i,N})$, the limiting density of particles when $N \to \infty$. In this scenario, the density $p$ is given, and the trajectory of the representative player follows
\[ X_t = X_0 + \int_0^t \big(\alpha(s,X_s)+b(X_s,p(s,X_s))\big)\, ds + W_t\]
with $X_0 \sim p_0 \, dx$. The cost functional is
\[ J(\alpha) = \mathbb{E} \left[ \int_0^T\left(\frac{1}{2}\left|\alpha(s, X_s)\right|^2+f(X_s, p(s,X_s))\right) d s+ g(X_T)\right]\]

\noindent {\color{red} where $f: \Rd \times \R_+ \to \Rd$ satisfies (H1)}. In Section 4 of \cite{Flandoli}, the authors find the optimizing $\alpha$ over the class of bounded continuous functions, under the assumptions (H1) for {\color{red}$b$ and $f$}, (H3) and $g,\partial_i g, p_0 \in C_b(\Rd)$, with

\[ \int_{\Rd} e^{\lambda |x|}\, p_0(x) \, dx < \infty \]
\noindent for all $\lambda>0$. The optimal $\alpha^*(t,x) = -\nabla u(t,x)$, where $(u,p)$ is a mild solution (or equivalently, in this case, a weak solution) of the MFG PDE system
\begin{equation}\label{MFG_PDE}
\Scale[0.94]{
\begin{cases}-\partial_t u-\frac{1}{2} \Delta u-b(x, p(t, x)) \cdot \nabla u+\frac{1}{2}|\nabla u|^2=f(x, p(t, x)), & (t, x) \in[0, T) \times \mathbb{R}^d, \\ \partial_t p-\frac{1}{2} \Delta p+\nabla \cdot [p(t, x)(-\nabla u(t, x)+b(x, p(t, x)))]=0, & (t, x) \in(0, T] \times \mathbb{R}^d, \\ p(0, \cdot)=p_0(\cdot), \quad u(T, \cdot)=g(\cdot), & x \in \mathbb{R}^d. \end{cases}}
\end{equation}

\noindent with $u, \partial_i u, p \in C_b([0,T]\times \Rd)$. The MFG PDE is a coupled system consisting of a backward Hamilton-Jacobi Bellman equation for the value function $u$ and a Kolmogorov forward equation for the density $p$. {\color{red} In \cite{Flandoli} the authors 
 show existence and uniqueness result for the PDE system (\ref{MFG_PDE}). Many authors have studied this type of system in the last years, see for instance 
 \cite{Gomes}, \cite{Larsy}, \cite{Larsy2} \cite{Porreta}, \cite{Porreta2}.}

Furthermore, the authors prove convergence in probability of $S^N_t$ (for fixed $\alpha$) towards $\mu \in C([0,T],\mathcal{P}(\Rd))$, which is absolutely continuous with respect to the Lebesgue measure on $\Rd$, having density $p_t$, the mild solution of the MFG system.

Our contribution in this setting is to prove, as an application of Theorem \ref{mainthm}, the convergence of $p^N$ towards $p$ in the $H^{\lambda}_q(\Rd)$ norm, with explicit rate, when all players use the optimal strategy $\alpha=-\nabla u$. For that, we need to enhance the regularity of $-\nabla u$:

\begin{prop}\label{reg_u}
    Let (H1), (H3), (H4) be in place, and assume also $\nabla g \in C_b^{\eta}(\Rd)$. Then, the value function $u$ coming from the solution pair $(u,p)$ of (\ref{MFG_PDE}) is such that $\nabla u \in C([0,T],C^\eta_b(\Rd;\Rd)) $.
\end{prop}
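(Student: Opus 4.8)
The plan is to remove the quadratic gradient nonlinearity by the Hopf--Cole (logarithmic) transform and then read off the regularity of $\nabla u$ from a \emph{linear} parabolic equation. Recall that, by the existence result of \cite{Flandoli} invoked above, the pair $(u,p)$ satisfies $u,\partial_i u,p\in C_b([0,T]\times\Rd)$; in particular $M:=\sup_{[0,T]\times\Rd}|\nabla u|<\infty$ and $u$ is bounded. Set $w:=e^{-u}$. A direct computation using $\nabla u=-\nabla w/w$ and $\Delta u=-\Delta w/w+|\nabla w|^2/w^2$ shows that the quadratic term $\tfrac12|\nabla u|^2$ and the Laplacian term $-\tfrac12\Delta u$ produce canceling contributions $\pm\tfrac12|\nabla w|^2/w^2$, so that the Hamilton--Jacobi--Bellman equation in \eqref{MFG_PDE} becomes the linear backward equation
\[
\partial_t w+\tfrac12\Delta w+b(x,p)\cdot\nabla w=f(x,p)\,w,\qquad w(T,\cdot)=e^{-g}.
\]
Crucially, the coefficients of this equation depend on $u$ only through $p$, not through $\nabla u$, so the nonlinearity has genuinely been linearized. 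Reversing time, $v(t,\cdot):=w(T-t,\cdot)$ solves a forward linear parabolic equation with initial datum $e^{-g}$.

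Next I would record the regularity of the data and coefficients. Since $\nabla g\in C_b^{\eta}$ and $g$ is bounded, the datum $e^{-g}$ belongs to $C_b^{1+\eta}$, and since $u$ is bounded we have $0<e^{-\|u\|_\infty}\le w\le e^{\|u\|_\infty}$, so $w$ is bounded away from $0$. For the drift and zeroth-order coefficients, I would use that $p$ is spatially H\"older continuous, uniformly in time: this follows either from the mild-solution framework $p\in C([0,T],H^{\lambda}_q)$, $\lambda>d/q$, together with the Sobolev embedding \eqref{bessel holder emb}, or---starting only from $p\in C_b$---by a short bootstrap on the Fokker--Planck equation in \eqref{MFG_PDE}, whose transport field $-\nabla u+b$ is bounded, using the gradient smoothing of the heat semigroup. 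In either case $p_t\in C_b^{\gamma}$ with $\gamma>0$ uniformly in $t$, and the Lipschitz bound (H1) then gives that $b(\cdot,p_t)$ and $f(\cdot,p_t)$ are bounded and $\min(1,\gamma)$-H\"older in space, uniformly in $t$.

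With this in hand the conclusion follows from linear parabolic regularity. Writing the mild form $v_t=e^{(t/2)\Delta}e^{-g}+\int_0^t e^{(t-s)\Delta/2}\big(b(\cdot,p_{T-s})\cdot\nabla v_s-f(\cdot,p_{T-s})v_s\big)\,ds$ and differentiating, I would estimate $\|\nabla v_t\|_{C_b^{\eta}}$ using the H\"older smoothing bound $\|\nabla e^{\tau\Delta}h\|_{C_b^{\eta}}\le C\tau^{-\frac12-\frac{(\eta-\gamma')_+}{2}}\|h\|_{C_b^{\gamma'}}$ for the heat semigroup (the analogue in the H\"older scale of \eqref{Semi}). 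The product $b(\cdot,p)\cdot\nabla v_s$ is controlled by $\|b(\cdot,p)\|_{C^{\gamma'}}\|\nabla v_s\|_{C^{\eta}}$, and since the exponent $\tfrac12+\tfrac{(\eta-\gamma')_+}{2}<1$ the resulting kernel is integrable; a singular Gronwall inequality then yields $\sup_{t\in[0,T]}\|\nabla v_t\|_{C_b^{\eta}}<\infty$, and continuity in time is read off from the same integral identity. Undoing the transforms, $\nabla u=-\nabla w/w$; since $w\in C([0,T],C_b^{1+\eta})$ is bounded below and $s\mapsto 1/s$ is smooth away from $0$, the quotient lies in $C([0,T],C_b^{\eta})$, which is the claim.

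The main obstacle is precisely the quadratic term $\tfrac12|\nabla u|^2$: a naive fixed point in H\"older spaces produces a \emph{quadratic} Gronwall inequality that need not close on all of $[0,T]$. The Hopf--Cole transform dissolves this difficulty by turning the equation linear; alternatively, one may keep the equation for $u$ and exploit the a priori bound $\|\nabla u\|_\infty\le M$ to estimate $\||\nabla u|^2\|_{C^{\eta}}\le 2M\|\nabla u\|_{C^{\eta}}$, which again linearizes the Gronwall inequality. A secondary technical point, which must be handled with care, is that $u$ and $w$ are bounded but do not decay at infinity, so they do not lie in any $L^q(\Rd)$ or Bessel space $H^{\lambda}_q(\Rd)$; the whole argument for $u$ must therefore be carried out in the H\"older scale $C_b^{\gamma}$ rather than in the Bessel scale used for $p$.
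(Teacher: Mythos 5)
Your route (Hopf--Cole linearization followed by linear parabolic regularity) is genuinely different from the paper's, and it can be made to work, but as written it has a gap and it also misdiagnoses where the difficulty lies. The gap is the coefficient-regularity step: you need $b(\cdot,p_t)$ and $f(\cdot,p_t)$ to be spatially H\"older, and for this you claim $p_t\in C_b^{\gamma}$ \emph{uniformly in $t$}. Neither of your two justifications is available under the hypotheses of Proposition \ref{reg_u}: the Bessel framework $p\in C([0,T];H^{\lambda}_q)$ belongs to Theorem \ref{mainthm} and is not in force here --- the pair $(u,p)$ is Flandoli's solution, for which $p_0$ and $p$ are merely bounded continuous, with no H\"older or Sobolev regularity of $p_0$ assumed --- while the bootstrap on the Fokker--Planck equation only yields $\|p_t\|_{C^{\gamma}}\lesssim 1+t^{-\gamma/2}$, a bound that blows up at $t=0$ because $e^{t\Delta/2}p_0$ is no better than $p_0$ as $t\to 0$. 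The blow-up turns out to be harmless: after your time reversal it sits at $s=T$, and since $T-s\ge t-s$ the kernel in your Volterra inequality is still dominated by $(t-s)^{-\frac{1+\eta}{2}}$, so Proposition \ref{Gronwall} applies; but this patch is needed and is absent from your text.

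More importantly, the linearization is unnecessary, and this is exactly where the paper's proof is shorter. The paper works directly with the mild formulation of $u$: since $b$ and $f$ are bounded by (H1) and $\nabla u$ is bounded as part of the definition of solution in \cite{Flandoli}, the whole integrand $b(\cdot,p_s)\cdot\nabla u_s-\frac12|\nabla u_s|^2+f(\cdot,p_s)$ is bounded in $L^{\infty}$ uniformly in $s$; the smoothing estimate $\|\nabla e^{(s-t)\Delta}h\|_{\eta}\le (s-t)^{-\frac{1+\eta}{2}}\|h\|_{\infty}$ (Lemma C.3 of \cite{Flandoli}), whose singularity is integrable because $\eta<1$, then gives $\sup_{t}\|\nabla u_t\|_{\eta}\le C\|\nabla g\|_{\eta}+C_T$ in a single step --- no Gronwall, no fixed point, and no spatial regularity of $p$ whatsoever. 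Thus the ``main obstacle'' you identify (a quadratic Gronwall inequality coming from $\frac12|\nabla u|^2$) is not an obstacle in this problem: one is estimating a \emph{given} solution with $\|\nabla u\|_{\infty}\le M$ a priori, so the quadratic term is just another bounded function; even your fallback estimate $\||\nabla u|^2\|_{C^{\eta}}\le 2M\|\nabla u\|_{C^{\eta}}$ concedes more than needed, since only the $L^{\infty}$ norm of the integrand ever enters. The same observation would let you delete both the Hopf--Cole transform and the coefficient-regularity step from your own argument.
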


\begin{proof}
    We have the mild formulation for $u$:
\begin{align*}
    u(t,x) &= e^{(T-t)\Delta}g(x) \\
    &+ \int_t^T e^{(s-t)\Delta} \Big(b(x,p(s,x))\cdot \nabla u(t,x) - \frac{1}{2}|\nabla u(s,x)|^2 + f(x,p(s,x))  \Big) \:\! ds
\end{align*}
$\forall t \in [0,T]$, $\forall x \in \mathbb{R}^d$. Thus we have

\begin{align*}
  \| \nabla u(t,\cdot) \|_{\eta} &=  \| \nabla e^{(T-t)\Delta}g \|_{\eta} \\
    &\quad + \int_t^T \big \| \nabla e^{(s-t)\Delta} \big(b(\cdot,p_s) \cdot \nabla u_s - \frac{1}{2}|\nabla u_s|^2 + f(\cdot,p_s) \big) \big \|_{\eta}\, ds \\
    &\le C \|\nabla g \|_{\eta} \\
    &\quad +  \int_t^T \frac{1}{(s - t)^{\frac{1+\eta}{2}}}\big \| b(\cdot,p_s)\cdot \nabla u_s - \frac{1}{2}|\nabla u_s|^2 + f(\cdot,p_s) \big \|_{\infty}\, ds \\
    &\le C \|\nabla g \|_{\eta} +  C\int_t^T \frac{1}{(s - t)^{\frac{1+\eta}{2}}}\,ds \\
    &= C \|\nabla g \|_{\eta} + C_T < \infty,
\end{align*}
{\color{red}  where, the first inequality is derived using the same computations as in \cite{Flandoli}, Lemma $C.3$, to get, for $h \in L^{\infty}$ and $s>t$,
\begin{align*}
\left\|\nabla e^{(s-t)\Delta}h\right\|_{\eta}\leq \frac{\left\|h\right\|_{\infty}}{(s-t)^{\frac{1+\eta}{2}}}.
	\end{align*}

The second inequality holds since $b,f$ are bounded by hypothesis (H1), and $\nabla u$ is bounded by the definition of solution in \cite{Flandoli}.
}

\end{proof}

{\color{red} Notice that the above result gives conditions for the strategy $\alpha(t,x) = -\nabla u(t,x)$ to satisfy hypothesis (H2)}. Then the following holds, as a corollary of Theorem \ref{mainthm}.

\begin{prop} {\color{red}Let $q\ge 2$, $q>d$ and assume (H1), (H3), and that $\nabla g \in C^\eta_b(\Rd)$.  Let $T_{\max}$ be the maximal existence time for \eqref{eq:PDE2}, fix $T\in(0,T_{\max})$ and 
	take $\delta>0$ such that $\rho>0$, where
\begin{align*}
	\rho = \min \left\{\frac{\beta}{d}\Big(\eta \wedge \Big(\lambda-\frac{d}{q}\Big)\Big), \,  \frac{1}{2} - \beta\Big(1+\frac{\lambda}{d}-\frac{1}{q}\Big)\right\}-\delta. 
\end{align*}
Let the dynamics of the particle system be given by \eqref{SDE} with $\alpha^{i,N}(t,x)=-\nabla u(t,x)$ for $i=1,...,N$, and assume

\begin{align}
\lim_{N \to \infty}N^{\rho} \, \| p^N_{0}-p_{0} \|_{\lambda,q}=0, \, \mathbb{P}\,\text{-a.s.}
\end{align}

Then, we have 

\begin{align}
\lim_{N \to \infty}N^{\rho}\sup_{t \in [0,T]} \| p^N_{t}-p_{t} \|_{\lambda,q}=0, \, \mathbb{P}\,\text{-a.s.}
\end{align}}

\end{prop}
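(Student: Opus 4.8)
The plan is to obtain this Proposition as a direct corollary of Theorem \ref{mainthm}, so that the only genuine work is to check that the specific choice $\alpha^{i,N}(t,x) = -\nabla u(t,x)$ falls within the scope of that theorem. Observe that the rate $\rho$ stated here is \emph{verbatim} the one in Theorem \ref{mainthm}, that hypotheses (H1) and (H3) are assumed directly, and that the initial-data convergence $\lim_{N} N^{\rho}\|p^N_0 - p_0\|_{\lambda,q} = 0$ is also part of the hypotheses. Consequently, once the drift is shown to satisfy the structural requirement (H2), the conclusion follows immediately by applying Theorem \ref{mainthm}.

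First I would verify hypothesis (H2) for $\alpha := -\nabla u$. This is precisely the content of Proposition \ref{reg_u}: under (H1), (H3) and $\nabla g \in C_b^{\eta}(\Rd)$ (together with the regularity needed to produce the MFG solution pair $(u,p)$), one has $\nabla u \in C([0,T], C_b^{\eta}(\Rd;\Rd))$, so that $\alpha^{i,N} = -\nabla u$ is continuous in time with values in $C_b^{\eta}$, i.e.\ (H2) holds with the same exponent $\eta$ that enters the definition of $\rho$. This is the step in which the new estimate $\|\nabla e^{(s-t)\Delta}h\|_{\eta} \le \|h\|_{\infty}(s-t)^{-\frac{1+\eta}{2}}$ together with the a priori boundedness of $b$, $f$ and $\nabla u$ are essential, and it is where the regularity transfer from the terminal datum $g$ to $\nabla u$ actually takes place.

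Next I would identify the limit equation. The forward Kolmogorov equation in the MFG system \eqref{MFG_PDE}, that is, the evolution of the density $p$, is exactly \eqref{eq:PDE2} with $\alpha(t,x) = -\nabla u(t,x)$ and the prescribed $b$. Therefore the density $p$ of the MFG solution pair is a mild solution of \eqref{eq:PDE2} in the sense of Definition \ref{def:defMild2}, and the particle dynamics \eqref{SDE} with $\alpha^{i,N} = -\nabla u$ is precisely the $N$-particle system whose mollified empirical density $p^N_t = V^N \ast S^N_t$ is controlled by Theorem \ref{mainthm}. With (H1), (H2), (H3), the rate $\rho$ and the initial condition all in place, Theorem \ref{mainthm} applies and yields $\lim_{N} N^{\rho}\sup_{t\in[0,T]}\|p^N_t - p_t\|_{\lambda,q} = 0$ almost surely, which is the claim.

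The main point requiring care, rather than a true obstacle, is the consistency of the two descriptions of $p$: one must confirm that the density appearing in the MFG pair has the regularity $p \in C([0,T], H^{\lambda}_q)$ demanded by Definition \ref{def:defMild2}, and that the diffusion normalization of the forward equation in \eqref{MFG_PDE} is matched to that of \eqref{eq:PDE2}, so that the mild formulation \eqref{eq:mildKS2} is genuinely satisfied by the MFG density. These facts are furnished by the well-posedness and regularity theory for \eqref{MFG_PDE}. Beyond this bookkeeping there is no further difficulty: the heavy analytic content lives entirely in Theorem \ref{mainthm} and in Proposition \ref{reg_u}, and the present statement is a clean application of the two.
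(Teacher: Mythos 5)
Your proposal is correct and follows exactly the paper's route: the paper also presents this result as a direct corollary of Theorem \ref{mainthm}, using Proposition \ref{reg_u} to verify that $\alpha = -\nabla u$ satisfies (H2), and then invoking the theorem. Your additional remarks about matching the forward equation of \eqref{MFG_PDE} with \eqref{eq:PDE2} (including the diffusion normalization) are sensible bookkeeping that the paper leaves implicit, but they do not change the argument.
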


\section{ Proof of Theorem \ref{th:rate2} }\label{subsec:proofRate2}

The proof of the rate of convergence relies on the following mild formulation of the mollified empirical measure:
\begin{equation}
	\label{eq:mildeq}
	\begin{split}
		p^N_t(x)=e^{t\Delta }p^N_0(x) -  \int_0^t \nabla \cdot e^{(t-s)\Delta } \langle S_s^N,  V^N ( x-\cdot)   K\ast p^N_s  \rangle \ ds \\
		- \frac{1}{N} \sum_{i=1}^N \int_0^t  e^{(t-s)\Delta} \nabla V^N (x-X_s^{i,N})\cdot dW^i_s, 
	\end{split}
\end{equation}

\noindent see formula (2.3) in \cite{Pisa}. 

Now we subtract and add the term $\int_{0}^t \nabla\cdot e^{(t-s)\Delta}   \langle S^N_{s},  V^N (x-\cdot)  K\ast p_s(x) \rangle \ ds$, and compare with the mild formulation of $p$. We get
\begin{align*}
	p^N_{t}(x) - p_{t}(x) &= e^{t\Delta} (p^N_{0} - p_{0})(x) \\
	+  \int_0^t  &\nabla \cdot e^{(t-s)\Delta } (p_{s}  (K\ast p_s) - p^{N}_{s}  (K\ast p_s))(x)\, ds  + E_{t}(x) - M^N_{t}(x),
\end{align*}
where we have set
\begin{align}\label{eq:defM}
	E_{t}(x)&:= \int_0^t \nabla \cdot e^{(t-s)\Delta } \langle S^N_{s},  V^N (x-\cdot) \left(K\ast p_s(x)- K\ast p^N_s(\cdot)\right)\rangle \ ds,\nonumber\\
	M^N_{t}(x) &:=\frac{1}{N} \sum_{i=1}^N \int_0^t  e^{(t-s)\Delta} \nabla V^N (x-X_s^{i,N})\cdot dW^i_s. 
\end{align}

Then, we have {\color{red} by $(\ref{Semi})$ with $\lambda = 0$ that,}
\begin{equation}\label{dife}
	\begin{split}
		\| p^N_t-p_{t} \|_{L^{q}(\T^{d})} &\leq \|e^{t\Delta }(p^N_0- p_0 )\|_{L^{q}(\T^{d})} \\
		&\quad + C \int_0^t \frac{1}{\sqrt{t-s}} \| (p_{s}-p_{s}^{N})  K\ast p_s   \|_{L^{q}(\T^{d})} ds\\
		&\quad + \| E_{t}\|_{L^{q}(\T^{d})}  + \|  M^{{N}}_{t}\|_{L^{q}(\T^{d})}. 
	\end{split}
\end{equation}

{\color{red}We will use the bound $\|e^{t\Delta }(p^N_0- p_0 )\|_{L^{q}(\T^{d})}\leq \|p^N_0- p_0\|_{L^{q}(\T^{d})}$,  which holds since the heat semigroup is a contraction semigroup in $L^q$ spaces}. Also we observe 

\[
\|  (p_{s}-p_{s}^{N})  K\ast p_s \|_{L^{q}(\T^{d})} \leq C
\|  p_{s}-p_{s}^{N}   \|_{L^{q}(\T^{d})} . 
\]

Now, we will estimate $ \| E_{t}\|_{L^{q}(\T^{d})} $. 
We subtract and add the term \\ $  \langle S^N_{s},  V^N (x-\cdot)  K\ast p_s(\cdot) \rangle  $, then we have

\[
\| E_{t}\|_{L^{q}(\T^{d})}
\]
\[
\leq \int_{0}^{t}  \frac{1}{\sqrt{t-s}} \|\langle S^N_{s},  V^N (x-\cdot) ( K\ast p_s(x) - K\ast p_s(\cdot)) \rangle   \|_{L^{q}(\T^{d})}
\]
\[
+ \int_{0}^{t}  \frac{1}{\sqrt{t-s}} \|\langle S^N_{s},  V^N (x-\cdot) ( K\ast p_s^{N}(\cdot) - K\ast p_s(\cdot)) \rangle   \|_{L^{q}(\T^{d})}=:I_{1} + I_{2}.
\]

By assumption, $K \ast p \in L^{\infty}([0,T],E_{q,r}^{\lambda})$, thus by Sobolev embedding {\color{red}in (\ref{besov/triebel holder emb})}, $K\ast p_s \in C^{\lambda -\frac{d}{q}}$, and we get
\begin{align*}
	V^N (x-\cdot) |K\ast p_s(x) - K\ast p_s(\cdot)| &\le C V^N (x-\cdot)|x-\cdot|^{\lambda - \frac{d}{q}} \\
	&\le C V^N (x-\cdot)N^{-\frac{\beta}{d}(\lambda - \frac{d}{q})}.
\end{align*}
Therefore, we have 
\begin{align}
	I_{1} &\leq  \int_{0}^{t}  \frac{C}{\sqrt{t-s}} N^{-\frac{\beta}{d}(\lambda-\frac{d}{q})}
	\|  p_{s}^{N} \|_{L^{q}(\T^{d})} \ ds  \nonumber \\
	& \leq C
	\int_{0}^{t}  \frac{1}{\sqrt{t-s}} \|  p_{s}^{N}-  p_{s}\|_{L^{q}(\T^{d})}\  ds  \nonumber \\ 
	&\quad  +  C N^{-\frac{\beta}{d}(\lambda-\frac{d}{q})}  \int_{0}^{t}  \frac{1}{\sqrt{t-s}} \| p_{s}\|_{L^q(\T^{d})} \ ds \nonumber \\
	&= C
	\int_{0}^{t}  \frac{1}{\sqrt{t-s}}  \|  p_{s}^{N}-  p_{s}\|_{L^{q}(\T^{d})}\  ds  +  C_T {N^{-\frac{\beta}{d}(\lambda-\frac{d}{q})}} .\label{E1}
\end{align}

By convolution inequality, we obtain 

\[
I_{2} 
\leq C  \int_{0}^{t}  \frac{1}{\sqrt{t-s}} 
\| p_{s}^{N}-p_{s}\|_{L^{q}(\T^{d})}
\| p_{s}^{N}\|_{L^q(\T^{d})}  ds 
\]

\[
\leq C  \int_{0}^{t}  \frac{1}{\sqrt{t-s}} 
\| p_{s}^{N}-p_{s}\|_{L^{q}(\T^{d})}^{2} \ ds 
\]

\begin{equation}\label{E2}
	+ C  \int_{0}^{t}  \frac{1}{\sqrt{t-s}} 
	\| p_{s}^{N}-p_{s}\|_{L^{q}(\T^{d})}\ ds.
\end{equation}

Now, we will estimate  $\|  M^{{N}}_{t}\|_{L^{q}(\T^{d})}$. {\color{red} If $q \in [1,2) $, we use the estimate $\|  M^{{N}}_{t}\|_{L^{q}(\T^{d})} \lesssim \|  M^{{N}}_{t}\|_{L^{2}(\T^{d})}$. For $q \ge 2$, 
	by Proposition A.8 of \cite{Pisa} and Proposition \ref{borel cantelli}}, we deduce that there exists a random variable $A_0$ with finite moments such that almost surely,
\begin{equation}\label{martingale}
	\sup_{t \in [0,T]}\|  M^{{N}}_{t}\|_{L^{q}(\T^{d})}\leq A_0\, N^{-\frac{1}{2} + \frac{\beta}{2}\big(1 +2 \big(\frac{1}{2}-\frac{1}{q}\big)\vee0\big) + \delta}
\end{equation}

\noindent with $\delta >0$. From  (\ref{dife}),  (\ref{E1}), (\ref{E2}) and (\ref{martingale}) we obtain for all $q \ge 1$

\begin{equation}\label{dife3}
	\begin{split}
		\| p^N_t-p_{t} \|_{L^{q}(\T^{d})} &\leq \|p^N_0- p_0 \|_{L^{q}(\T^{d})} \\
		&\quad + C \int_0^t \frac{1}{\sqrt{t-s}} \| p_{s}-p_{s}^{N}  \|_{L^{q}(\T^{d})}^{2} \ ds\\
		&\quad +  C \int_0^t \frac{1}{\sqrt{t-s}} \| p_{s}-p_{s}^{N}  \|_{L^{q}(\T^{d})} \ ds\\
		&\quad +  C_{T}  N^{-\frac{\beta}{d}\big(\lambda-\frac{d}{q}\big)}  +  A_0\:\! N^{-\frac{1}{2} + \frac{\beta}{2}\big(1 +2 \big(\frac{1}{2}-\frac{1}{q}\big)\vee0\big) + \delta}. 
	\end{split}
\end{equation}

We set

\begin{equation}
	T^{N}=\inf \left\{t \geq 0: \| p_{t}-p_{t}^{N}  \|_{L^{q}(\T^{d})} \geq N^{-\rho}  \right\} \wedge T. 
\end{equation}

Then we have

\begin{equation}\label{dife_3}
	\begin{split}
		\| p^N_{t \wedge T^{N}}-p_{t \wedge T^{N}} \|_{L^{q}(\T^{d})} &\leq \|p^N_0- p_0 \|_{L^{q}(\T^{d})} \\
		&\quad + C \int_0^{t \wedge T^{N}} \frac{1}{\sqrt{t-s}} \| p_{s}-p_{s}^{N}  \|_{L^{q}(\T^{d})} \ ds+  C_{T}  N^{-2\rho}\\
		&\quad   +  C_{T}  N^{-\frac{\beta}{d}\big(\lambda-\frac{d}{q}\big)}  +  A_0\:\! N^{-\frac{1}{2} + \frac{\beta}{2}\big(1 +2 \big(\frac{1}{2}-\frac{1}{q}\big)\vee0\big) + \delta}.  
	\end{split}
\end{equation}

By Gr\"onwall Lemma

\begin{equation}\label{dife4}
	\begin{split}
		\sup_{t \in [0,T]} \| p^N_{t \wedge T^{N}}-p_{t \wedge T^{N}} \|_{L^{q}(\T^{d})}
		&\leq  C_{T}  \|p^N_0- p_0 \|_{L^{q}(\T^{d})} +  C_{T}  N^{-2\rho} \\
		&\quad +C_{T}  N^{-\frac{\beta}{d}\big(\lambda-\frac{d}{q}\big)}  + A_0\:\! N^{-\frac{1}{2} + \frac{\beta}{2}\big(1 +2 \big(\frac{1}{2}-\frac{1}{q}\big)\vee0\big) + \delta}. 
	\end{split}
\end{equation}

Then 

\begin{equation}\label{conve}
	\lim_{N \to \infty} N^{\rho}\sup_{t \in [0,T]} \| p^N_{t \wedge T^{N}}-p_{t \wedge T^{N}} \|_{L^{q}(\T^{d})}=0
\end{equation}

Suppose there are infinite numbers $N_{k}$ such that $T^{N_{k}}< T$. From (\ref{conve}) we have 

\[
\lim_{k \rightarrow\infty} N_{k}^{\rho} \sup_{t \in [0, T^{N_k}]} \| p^{N_{k}}_{t}-p_{t} \|_{L^{q}(\T^{d})} 
=0. 
\]

Therefore  for $k$ large  and   $t\leq T^{N_{k}}$ we obtain

$$
N_{k}^{\rho} \  \| p^{N_{k}}_t-p_t \|_{L^{q}(\T^{d})}   <  \epsilon,
$$

\noindent this  contradicts the definition of $T^{N_{k}}$.  So we conclude that $T^{N}=T$
for $N$ large enough. Thus we have

\begin{equation}\label{conve2}
	{\color{red}\lim_{N \to \infty} N^{\rho}\sup_{t \in [0,T]} \| p^N_{t}-p_{t} \|_{L^{q}(\T^{d})}=0, \, \mathbb{P}\,\text{-a.s}.}
\end{equation}

Now, we will show the second statement of our theorem. First, we observe that 

\begin{align*}
	\! \langle S^N_{t} \!\!\:- p_{t}, \phi \rangle &= 
	\langle S^N_{0} - p_{0}, e^{t\Delta}\phi \rangle \!\!\;+\!\!\;  \!\!\:\int_0^t  \!\!\: \langle S^{N}_{s}  (K\ast p_s^{N})- p_{s}  (K\ast p_s),  \nabla \!\! \; \cdot \!\!\; e^{(t-s)\Delta } \phi \rangle \!\: ds \\
	&\quad +  \langle M^N_{t},\phi \rangle, 
\end{align*}

\noindent where $\phi \in E_{q,r}^{\lambda}$ and
\begin{align*}
	\langle M^N_{t}, \phi \rangle &=\frac{1}{N} \sum_{i=1}^N \int_0^t  \nabla e^{(t-s)\Delta}  \phi(X_s^{i,N})\cdot dW^i_s. 
\end{align*}

Now, we subtract and add the term 
$\langle   S^{N}_{s},   (K\ast p_s) \, \nabla \cdot e^{(t-s)\Delta } \phi \rangle$ to have

\begin{align*}
	\langle S^N_{t} - p_{t}, \phi \rangle &= 
	\langle S^N_{0} - p_{0}, e^{t\Delta}\phi \rangle+  \int_0^t  \langle  S^{N}_{s} - p_{s} ,  (K\ast p_s) \, \nabla \cdot e^{(t-s)\Delta } \phi \rangle ~ ds \\
	&\quad  +  \int_0^t  \langle S^{N}_{s} ,  (K\ast p^N_s -K\ast p_s)\nabla \cdot e^{(t-s)\Delta } \phi \rangle ~ ds   +  \langle M^N_{t},\phi \rangle.
\end{align*}

{\color{red}Note that $\lambda>\frac{d}{q}$ implies  $E^{\lambda}_{q,r}$ is an algebra. Then by inequality (\ref{grad heat besov-triebel}) }

\begin{equation}\label{dife2}
	\begin{split}
		\| S^N_t-p_{t} \|_{E_{q^\prime,r^\prime}^{-\lambda}} &\leq \|e^{t\Delta }(S^N_0- p_0 )\|_{E_{q^\prime,r^\prime}^{-\lambda}} \\
		&\quad + C \int_0^t \frac{1}{\sqrt{t-s}} \| K\ast p\,\|_{L^{\infty}([0,T], E_{q,r}^{\lambda})} \, \| S_{s}^{N}-p_{s} \|_{_{E_{q^\prime,r^\prime}^{-\lambda}}} ds\\
		&\quad +  
		\int_0^t  \sup_{\| \phi\|_{E_{q,r}^{\lambda}}\leq 1}  \langle S^{N}_{s} ,  (K\ast p^N_s -K\ast p_s)\nabla \cdot e^{(t-s)\Delta } \phi \rangle \, ds \\
		&\quad + \|  M^{{N}}_{t}\|_{_{E_{q^\prime,r^\prime}^{-\lambda}}}. 
	\end{split}
\end{equation}

We observe that 

\[
\|e^{t\Delta }(S^N_0- p_0 )\|_{E_{q^\prime,r^\prime}^{-\lambda}}\leq \|S^N_0- p_0 \|_{E_{q^\prime,r^\prime}^{-\lambda}}.
\]

{\color{red}Also since $\lambda >\frac{d}{q}$, by inequalities (\ref{besov/triebel holder emb}) and (\ref{grad heat besov-triebel}) we have}
\[
\int_0^t  \sup_{\| \phi\|_{E_{q,r}^{\lambda}}\leq 1}  \langle S^{N}_{s} ,  (K\ast p^N_s -K\ast p_s)\nabla \cdot e^{(t-s)\Delta } \phi \rangle    \ ds
\]

\[
\leq \int_0^t   \sup_{\| \phi\|_{E_{q,r}^{\lambda}}\leq 1}  \| K\ast p^N_s -K\ast p_s \|_{\infty} 
\|\nabla \cdot e^{(t-s)\Delta } \phi\|_{\infty}  \ ds 
\]
{\color{red}\[
	\leq \int_0^t   \sup_{\| \phi\|_{E_{q,r}^{\lambda}}\leq 1}  \| K\ast p^N_s -K\ast p_s \|_{\infty} 
	\|\nabla \cdot e^{(t-s)\Delta } \phi\|_{E^{\lambda}_{q,r}}  \ ds 
	\]}
\[
\le \int_0^t \frac{1}{\sqrt{t-s}}   \| K\ast p_s^N -K\ast p_s \|_{\infty}  \ ds. 
\]

\begin{equation}\label{ine}
	\le \int_0^t \frac{1}{\sqrt{t-s}}  \,  \| p^N_s - p_s \|_{L^q(\mathbb{T}^d)}  \ ds. 
\end{equation}

From the first   statement of our theorem, we obtain 
\begin{align}
	{\color{red}\lim_{N \to \infty}N^{\rho}\sup_{t \in [0,T]} \| p^N_{t}-p_{t} \|_{L^{q}(\T^{d})}=0, \, \mathbb{P}\text{- a.s.} }
\end{align}

By Sobolev embedding {\color{red} in (\ref{besov/triebel bessel emb}),}
\[ \|  M^{{N}}_{t}\|_{{E_{q^\prime,r^\prime}^{-\lambda}}}\leq C \|  M^{{N}}_{t}\|_{H^{-\lambda {\color{red}+\delta}}_{q^\prime}} .\]

Now, by the proof of Proposition A.12 in \cite{Pisa} {\color{red} and Proposition \ref{borel cantelli}}, we deduce that {\color{red} if $q^\prime \ge 2$}, there exists a random variable $\Tilde{A}_0$ with finite moments such that almost surely,

\begin{equation}\label{Mart}
	\|  M^{{N}}_{t}\|_{H^{-\lambda{\color{red}+\delta}}_{q^\prime}}\leq
	\Tilde{A}_0 \:\! N^{-\frac{1}{2} + \frac{\beta}{2}\big(1 -\frac{2\lambda}{d} +2 \big(\frac{1}{q}-\frac{1}{2}\big)\vee0\big)+\delta}
\end{equation}

\noindent where we used $\frac{1}{2}-\frac{1}{q^\prime}=\frac{1}{q}-\frac{1}{2}$. {\color{red} Now, if $q^\prime \in [1,2)$, we observe that 
	\begin{align*}\|  M^{{N}}_{t}\|_{H^{-\lambda{\color{red}+\delta}}_{q^\prime}} &= \|(I-\Delta)^{\frac{-\lambda + \delta}{2}}  M^{{N}}_{t}\|_{L^{q^\prime}(\mathbb{T}^d)} \\ &\lesssim  \|(I-\Delta)^{\frac{-\lambda + \delta}{2}}  M^{{N}}_{t}\|_{L^{2}(\mathbb{T}^d)}  = \|  M^{{N}}_{t}\|_{H^{-\lambda{\color{red}+\delta}}_{2}}. 
	\end{align*} 
	Hence the estimate (\ref{Mart}) is valid for all $q^\prime \ge 1$.} Then, by Gr\"onwall lemma and estimates (\ref{dife2})-(\ref{Mart}) we deduce the second statement, {\color{red} with the same rate $\rho$. In fact, one can verify that 
	\[ \frac{1}{2} - \frac{\beta}{2}\Big(1 +2 \Big(\frac{1}{2}-\frac{1}{q}\Big)\vee0\Big) \le \frac{1}{2} - \frac{\beta}{2}\Big(1 -\frac{2\lambda}{d} +2 \Big(\frac{1}{q}-\frac{1}{2}\Big)\vee0\Big)\]
	\noindent by checking for $q\le 2$ and $q>2$, and using the hypothesis $q> \frac{d}{\lambda}$, which implies $\frac{\lambda}{d}>\frac{1}{q}$. }

\section{ Proof of Theorem \ref{mainthm} }\label{subsec:proofRate1}

From the mild formulations of $p_t$ in (\ref{eq:mildKS}) and of $p^N_t$, similar to (\ref{eq:mildeq}), we have

\begin{align}
&\|  p_{t}^N-p_{0}\|_{\lambda,q} \nonumber \\
&\quad \le  \|e^{t\Delta } \big( p_{0}^N - p_{0}\big)\|_{\lambda,q} \nonumber \\
& \quad \quad + \int_0^t \| \nabla e^{(t-s)\Delta }  \big[\big(V^N \ast \big(\alpha_{s}(\cdot))S^N_s\big)\big)-\alpha_{s} p_{s} \big]\|_{\lambda,q}\, ds \nonumber \\
	& \quad \quad + \int_0^t \| \nabla e^{(t-s)\Delta } \big[\big(V^N \ast \big(b(\cdot, p_{s}^N( \cdot ))S^N_s\big)\big)- b(\cdot,p_{s})p_{s} \big] \|_{\lambda,q} \, ds \nonumber \\ 
    & \quad \quad + \|{M^N_t} \|_{\lambda,q} \nonumber \\
	& \quad \le C\|p^N_0 - p_0\|_{\lambda,q} + (I) + (II) + \|M^N_t\|_{\lambda,q}. \label{eq4}
\end{align}
Now, we deal with the above-defined terms separately. 

\smallskip

We deduce {\color{red}the following from inequality $(\ref{Semi})$:}

\[ 
(I) \le  \int_0^t \frac{1}{(t-s)^{(\lambda+1)/2}}   \, 
\|\big[\big(V^N \ast \big(\alpha_{s}(\cdot)S^N_s\big)\big)-\alpha_{s}p_{s} \big]\|_{q} \,.
\]

We observe that
{\color{red} \begin{align}
[V^N \ast \big(\alpha_{s} S^N_s\big)-\alpha_{s}p_{s}^N ](x) = \left\langle S_s^N, V^N(x- \cdot) (\alpha_{s}( \cdot) - \alpha_{s}(x))\right\rangle \label{triang holder alpha}.
 \end{align} }

Then 
\begin{align*}
 \|\big( V^N \ast \big(\alpha_{s} S^N_s\big)\big)-\alpha_{s} p_{s} \|_{q} & 
  \le \, \| \big( V^N \ast \big(\alpha_{s} S^N_s\big)\big)-\alpha_{s} p_{s}^N \|_{q} \,  \\
 & \qquad +   \, \|\alpha_{s}p_{s}^N - \alpha_{s}p_{s} \|_{q} \,  \\
 & =: \big [(I.1) + (I.2)\big ]\, .
\end{align*}

{\color{red}Using the Hölder regularity of \(\alpha\) and the assumption \(\text{supp }V \subset \{y \in \mathbb{R}^d \mid |y| < 1\}\), as well as \(V^N = N^{\beta}V\big(N^{\frac{\beta}{d}} \cdot \big)\), \((\ref{triang holder alpha})\) implies
\begin{align*}
(I.1)  \le  C_\alpha N^{-\frac{\beta}{d} \eta} \,  \|p_{s}^N \|_{q} \le    C_\alpha N^{-\frac{\beta}{d} \eta} \|p_{s}\|_{q}+ \,  C_\alpha \|p_{s}^N -p_{s}\|_{\lambda, q } . \nonumber
\end{align*}}

We also have 
\begin{align*}
 (I.2) &=  \|\alpha_{s}p_{s}^N - \alpha_{s}p_{s} \|_{q} \le
 \|\alpha\|_{\infty}\|p_{s}^N - p_{s} \|_{\lambda,q}.
\end{align*}

From the above estimations, we obtain

\begin{equation}
  (I) \le C N^{-\frac{\beta}{d} \eta} \int_0^t \frac{1}{(t-s)^{(\lambda +1)/2}}   \, \|p_{s}\|_{q} \, ds +
		C\int_0^t \frac{1}{(t-s)^{(\lambda +1)/2}} \, \|p^N_s -p_s\|_{\lambda,q} \, ds.\label{lemma1}
\end{equation}

From (\ref{Semi}) we get

\[
(II) \leq  \int_0^t   \frac{1}{(t-s)^{(\lambda+1)/2}} \| V^N \ast \big(b(\cdot, p_{s}^N)S^N_s\big)- b(\cdot,p_{s})p_{s}  \|_{q} \, ds .
\]

Now we subtract and add the term   $V^N \ast \big(b(\cdot, p_{s}( \cdot ))S^N_s\big)$ to get

\[
(II)
\]
\[
 \leq  \int_0^t 
\frac{1}{(t-s)^{(\lambda+1)/2}} 
\| \big(V^N \ast \big(b(\cdot, p_{s}^N( \cdot ))S^N_s\big)\big)- \big(V^N \ast \big(b(\cdot, p_{s}( \cdot ))S^N_s\big)\big)  \|_{q}
  \, ds
	\]
\[		
+\int_0^t  \frac{1}{(t-s)^{(\lambda+1)/2}}   \| \big(V^N \ast \big(b(\cdot, p_{s}( \cdot ))S^N_s\big)\big)  - b(\cdot,p_{s})p_{s}  \|_{q}
  \, ds.
\]

\[
  =:  \big [(II.1) + (II.2)\big ] \, .
\]

\quad  From hypothesis (H1)  we obtain 

\[
 (II.1) 
\]
\[
\leq  \int_0^t 
\frac{1}{(t-s)^{(\lambda+1)/2}}  \| p_{s}^N- p_{s} \|_{\infty} \
\| p_{s}^{N}  \|_{q}
  \, ds
\]

\[
\leq  \int_0^t 
\frac{1}{(t-s)^{(\lambda+1)/2}}  \| p_{s}^N- p_{s} \|_{\lambda,q}^{2} \
  \, ds  +  \int_0^t 
\frac{1}{(t-s)^{(\lambda+1)/2}}  \| p_{s}\|_{q} \  \| p_{s}^N- p_{s} \|_{\lambda,q} \
  \, ds.
\]

For the other one, we subtract and add the term   $b(x,p_{s}(x)) p_{s}^{N}(x)$ we deduce

\[
 (II.2)
\]		
\[		
\le \int_0^t  \frac{1}{(t-s)^{(\lambda+1)/2}}  \| V^N \ast \big(b(\cdot, p_{s}(\cdot))-b(x, p_{s}(x))\big)S^N_s \|_{q}
  \, ds
\]
\[
+ \int_0^t  \frac{1}{(t-s)^{(\lambda+1)/2}}   \|b(\cdot,p_{s}) p_{s}^{N}  - b(\cdot,p_{s})p_{s}  \|_{q}
  \, ds
\]
\[
=: [(II.2.1)+ (II.2.2)].
\]
By Sobolev embedding {\color{red} in (\ref{bessel holder emb})} and hypothesis (H1), we arrive at

\[
 (II.2.1)
\]
\[
\leq C \big(N^{-\frac{\beta}{d}}+N^{-\frac{\beta}{d}(\lambda-d/q)}\big) \int_0^t  \frac{1}{(t-s)^{(\lambda+1)/2}} \|p_{s}^{N} \|_{q}   \, ds
\]
\[
\leq  C \int_0^t  \frac{1}{(t-s)^{(\lambda+1)/2}} \|p_{s}^{N}-p_{s}\|_{\lambda,q}   \, ds
\]
\[
+ C \big(N^{-\frac{\beta}{d}}+N^{-\frac{\beta}{d}(\lambda-d/q)}\big) \int_0^t  \frac{1}{(t-s)^{(\lambda+1)/2}} \|p_{s} \|_{q}   \, ds.
\]

Also, we have

\[
(II.2.2)
\]
\[
\leq
\int_0^t  \frac{C}{(t-s)^{(\lambda+1)/2}}   \| p_{s}^{N}  - p_{s}  \|_{\lambda,q}  \, ds.
\]

{\color{red}Since $q\ge2$,} by the proof of Proposition A.12 of \cite{Pisa}, {\color{red}and  Proposition \ref{borel cantelli}}, we deduce that there exists a random variable $A^\prime_0$ with finite moments such that almost surely,
\[
\sup_{t \in [0,T]}\|M^N_t\|_{\lambda,q}
\leq A^\prime_0 \:\!    N^{-\frac{1}{2} + \frac{\beta}{2}\big(1+\frac{2\lambda}{d} +2 \big(\frac{1}{2}-\frac{1}{q}\big)\big)+\delta}. 
\]

From the previous estimations, we get

\begin{equation}\label{dife_4}
\begin{split}
\| p^N_t-p_{t} \|_{\lambda,q} &\leq \|p^N_0- p_0 \|_{\lambda,q} \\
&\quad + C \int_0^t \frac{1}{(t-s)^{(1+\lambda)/2}} \| p_{s}-p_{s}^{N}  \|_{\lambda,q} \ ds\\
&\quad +  C \int_0^t   \frac{1}{(t-s)^{(1+\lambda)/2}} \| p_{s}-p_{s}^{N}  \|_{\lambda,q}^{2} \ ds\\
&\quad  +  C_{T}  N^{-\frac{\beta}{d}\eta }  + C_{T}  N^{-\frac{\beta}{d}\big(\lambda-\frac{d}{q}\big)}  +  A^\prime_0 \:\! {\color{red}N^{-\frac{1}{2} + \beta\big(1+\frac{\lambda}{d} -\frac{1}{q}\big)+\delta}}.
\end{split}
\end{equation}

	We set

\begin{equation}
    T^{N}=\inf \left\{t \geq 0: \| p_{s}-p_{s}^{N}  \|_{\lambda,q} \geq N^{-\rho}  \right\} \wedge T. 
\end{equation} 

Following line by line the proof of the Theorem \ref{th:rate2} we conclude the proof.

\newpage
\appendix

\section{Appendix}

  {\color{red}
We have the following local well-posedness for the  Fokker-Planck equation  (\ref{eq:PDE}). The proof of the following result  is based on 
	Lemma A.2 in \cite{ORT}.

\begin{prop}\label{propPDE}
Assume that $u_{0}\in L^q(\T^d)$ and $K \in L^{q^\prime}(\mathbb{T}^d)$, with $1/q + 1/{q^\prime}=1$. Then there exists a maximal time $T^*\in (0,+\infty]$ and a unique mild solution to the Fokker-Planck equation 
which is in $C([0,T],L^q(\T^d))$, for any $T<T^*$.
\end{prop}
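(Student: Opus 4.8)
The plan is to construct the solution by a standard Banach fixed-point argument in the space $C([0,T],L^q(\T^d))$ for $T$ small, and then to extend it to a maximal interval by continuation. First I would define, for the given $p_0\in L^q(\T^d)$, the map $\Phi$ acting on $p\in C([0,T],L^q(\T^d))$ by
\[
\Phi(p)_t := e^{t\Delta}p_0 - \int_0^t \nabla \cdot \big( e^{(t-s)\Delta}(p_s\, K\ast p_s)\big)\, ds,
\]
so that, by Definition \ref{def:defMild}, a fixed point of $\Phi$ is precisely a mild solution. The goal is to show that $\Phi$ is a contractive self-map of a suitable closed ball of $C([0,T],L^q(\T^d))$.

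The key estimates rely on Young's convolution inequality together with the smoothing bound \eqref{Semi}. Since $K\in L^{q'}(\T^d)$, Young's inequality gives $\|K\ast p_s\|_{L^\infty(\T^d)}\le \|K\|_{L^{q'}(\T^d)}\|p_s\|_{L^q(\T^d)}$, so the quadratic nonlinearity obeys $\|p_s\,K\ast p_s\|_{L^q(\T^d)}\le \|K\|_{L^{q'}(\T^d)}\|p_s\|_{L^q(\T^d)}^2$. Combining this with \eqref{Semi} taken with $\lambda=0$, namely $\|\nabla e^{(t-s)\Delta}f\|_{L^q(\T^d)}\le C(t-s)^{-1/2}\|f\|_{L^q(\T^d)}$, and using that $\int_0^t (t-s)^{-1/2}\,ds = 2\sqrt t$ is finite, I obtain
\[
\sup_{t\in[0,T]}\|\Phi(p)_t\|_{L^q(\T^d)} \le \|p_0\|_{L^q(\T^d)} + C\|K\|_{L^{q'}(\T^d)}\sqrt{T}\,\sup_{t\in[0,T]}\|p_t\|_{L^q(\T^d)}^2.
\]
For the contraction property, given $p,\tilde p$ I would split the difference of the nonlinearities as $p_s\,K\ast p_s - \tilde p_s\,K\ast \tilde p_s = (p_s-\tilde p_s)\,K\ast p_s + \tilde p_s\,K\ast(p_s-\tilde p_s)$ and bound each term by $\|K\|_{L^{q'}(\T^d)}\big(\|p_s\|_{L^q(\T^d)}+\|\tilde p_s\|_{L^q(\T^d)}\big)\|p_s-\tilde p_s\|_{L^q(\T^d)}$, which yields
\[
\sup_{t\in[0,T]}\|\Phi(p)_t-\Phi(\tilde p)_t\|_{L^q(\T^d)} \le C\|K\|_{L^{q'}(\T^d)}\sqrt{T}\,R\,\sup_{t\in[0,T]}\|p_t-\tilde p_t\|_{L^q(\T^d)}
\]
on the ball $\{p:\sup_{t\in[0,T]}\|p_t\|_{L^q(\T^d)}\le R\}$. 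Choosing $R=2\|p_0\|_{L^q(\T^d)}$ and then $T$ small enough that $C\|K\|_{L^{q'}(\T^d)}\sqrt{T}\,R\le\tfrac12$ makes $\Phi$ a contractive self-map of this ball, and the Banach fixed-point theorem delivers a unique mild solution on $[0,T]$. Continuity in time, $p\in C([0,T],L^q(\T^d))$, follows from the strong continuity of the heat semigroup on $L^q$ and the Hölder-type control of the Duhamel integral already established above.

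Finally, to produce the maximal existence time I would set $T^*$ equal to the supremum of all $T>0$ for which a mild solution exists in $C([0,T],L^q(\T^d))$, and glue the local solutions together; uniqueness on overlapping intervals (again from the contraction estimate, or equivalently from a Grönwall argument applied to $\|p_t-\tilde p_t\|_{L^q(\T^d)}$) guarantees that this defines a single solution on $[0,T^*)$. The usual blow-up alternative---either $T^*=+\infty$ or $\|p_t\|_{L^q(\T^d)}\to\infty$ as $t\uparrow T^*$---holds because the length of the interval produced by the fixed-point step depends only on $\|p_0\|_{L^q(\T^d)}$, so any uniform bound on the norm near $T^*$ would permit a restart and contradict maximality. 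The main obstacle, though here essentially routine, is the spatial derivative in the Duhamel term: it costs the factor $(t-s)^{-1/2}$ through \eqref{Semi}, and the whole argument hinges on this singularity being integrable in $s$, which is exactly what produces the $\sqrt{T}$ gain and allows $T$ to be chosen small enough to close both the self-mapping and the contraction.
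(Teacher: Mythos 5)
Your proof is correct and follows essentially the same route as the paper: the paper also works in $C([0,T],L^q(\T^d))$, bounds the bilinear Duhamel term using the semigroup estimate \eqref{Semi} with $\lambda=0$ together with Young's inequality $\|K\ast v_s\|_{L^\infty(\T^d)}\le \|K\|_{L^{q'}(\T^d)}\|v_s\|_{L^q(\T^d)}$ to obtain the bound $C\,T^{1/2}\|u\|\,\|v\|$, and then concludes by the abstract bilinear contraction principle (Theorem 13.2 in Lemari\'e-Rieusset) rather than running the ball-plus-contraction argument by hand as you do. Your explicit continuation to the maximal time $T^*$ fills in a step the paper leaves implicit, but the core argument is identical.
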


\begin{proof}

Denote $\mathcal{X} = \mathcal{C}\left([0,T],L^q(\T^d)\right)$ and $ \|\cdot\|_{\mathcal{X}}$ the associated norm. Then define $B(u,v)_t= \int_0^t \nabla \cdot e^{(t-s)\Delta }  (u_{s} K\ast v_s)\, ds$ for $u,v \in \mathcal{X}$ and $t \in [0,T]$. In view of \eqref{Semi}, we have 
\begin{align*}
\|B(u,v) \|_{\mathcal{X}} &\leq C  \int_0^T \frac{\|u_s K\ast v_s\|_{L^q(\mathbb{T}^d)}}{\sqrt{T-s}}\,  ds\\
&\leq C  \int_0^T \frac{\|u_s\|_{L^q(\mathbb{T}^d)} \|K\ast v_s\|_{L^\infty(\T^d)} }{\sqrt{T-s}}\,  ds .
\end{align*}

 By convolution inequality we deduce that 

\[
\|B(u,v) \|_{\mathcal{X}} \leq C \, T^{1/2}   \|u\|_{\mathcal{X}} \|v\|_{\mathcal{X}}  . 
\]

Then by the contraction principle, see for instance Theorem 13.2 in \cite{Lemari}, we deduce the result for $T$ small enough.

\end{proof}

}

\section{Appendix}

\quad In this Appendix, we recall some results that we used throughout the text. The following is a generalization of the classical Gr\"onwall's Lemma (Lemma 7.1.1 in \cite{Henry}):

\begin{prop}\label{Gronwall}
    Suppose $b \ge0, \chi \ge 0$ and the functions $a(t), u(t)$ are nonnegative and locally integrable on $0\le t <T$, some $T <\infty$ with
    \[ u(t) \le a(t) + b \int_0^t (t-s)^{\chi- 1}u(s) \, ds  \]
    \noindent on this interval. Then
    \[ u(t) \le a(t) + \theta \int_0^t E_\chi^\prime(\theta(t-s))a(s)\, ds, \quad 0\le t < T, \]
    \noindent where 
    \[ \theta = (b\, \Gamma (\chi))^{1/\chi}, \, E_\chi(z)=\sum_{n=0}^\infty \frac{z^{n\chi}}{\Gamma(n\chi+1)}, \text{ and } E_\chi^\prime(z) = \frac{d}{dz}E_\chi(z). \]
    In particular, for $\chi=\frac{1}{2}$, we have the nice estimation $e^t \le E_{\frac{1}{2}}(t) \le 2e^t$.
\end{prop}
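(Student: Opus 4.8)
The plan is to prove this by a Picard-type iteration of the integral inequality, exploiting the positivity of the singular kernel. Writing $B\phi(t) := b\int_0^t (t-s)^{\chi-1}\phi(s)\,ds$, the hypothesis reads $u \le a + Bu$ pointwise. Since $b \ge 0$ and $(t-s)^{\chi-1}\ge 0$, the operator $B$ is linear and monotone (order-preserving) on nonnegative functions, so I may substitute the bound for $u$ back into itself. By induction on $N$ this yields
\[ u(t) \le \sum_{n=0}^{N-1} B^n a(t) + B^N u(t), \qquad 0 \le t < T, \]
for every $N \ge 1$, where $B^0 a = a$: indeed, applying the monotone map $B^N$ to $u \le a + Bu$ gives $B^N u \le B^N a + B^{N+1}u$, which advances the induction. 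The whole proof then reduces to three points: identifying the kernel of $B^n$, summing the resulting series, and showing the remainder $B^N u$ vanishes as $N \to \infty$.

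For the kernel identification, the key computation is that the iterated kernels combine through a Beta integral. Using the identity
\[ \int_s^t (t-\tau)^{\chi-1}(\tau-s)^{m\chi-1}\,d\tau = (t-s)^{(m+1)\chi-1}\,\frac{\Gamma(\chi)\,\Gamma(m\chi)}{\Gamma((m+1)\chi)}, \]
an induction on $n$ shows that $B^n$ acts by convolution against
\[ k_n(t,s) = \frac{(b\,\Gamma(\chi))^n}{\Gamma(n\chi)}\,(t-s)^{n\chi-1} = \frac{\theta^{n\chi}}{\Gamma(n\chi)}\,(t-s)^{n\chi-1}, \qquad \theta = (b\,\Gamma(\chi))^{1/\chi}. \]
To sum the series, I differentiate the Mittag-Leffler function term by term and use $\Gamma(n\chi+1) = n\chi\,\Gamma(n\chi)$, obtaining $E_\chi'(z) = \sum_{n\ge 1} z^{n\chi-1}/\Gamma(n\chi)$. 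Hence $\theta\,E_\chi'(\theta(t-s)) = \sum_{n\ge 1} k_n(t,s)$, so that $\sum_{n\ge 1} B^n a(t) = \theta\int_0^t E_\chi'(\theta(t-s))\,a(s)\,ds$, which is exactly the claimed right-hand side once the $n=0$ term $a$ is restored.

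The remaining point is where I expect to be slightly careful, though it is ultimately routine. On $[0,T]$, once $N\chi \ge 1$ I can bound $(t-s)^{N\chi-1}\le T^{N\chi-1}$ and use the local integrability of $u$ to get
\[ 0 \le B^N u(t) \le \frac{(\theta T)^{N\chi}}{T\,\Gamma(N\chi)}\,\|u\|_{L^1(0,T)}, \]
and the super-exponential growth of $\Gamma(N\chi)$ forces this to tend to $0$ uniformly in $t$; the same $\Gamma$-growth bound shows $\sum_{n\ge1}B^n a$ converges absolutely, justifying the term-by-term manipulations. Letting $N\to\infty$ in the iterated inequality then closes the argument. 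Finally, for the special case $\chi = 1/2$, splitting $E_{1/2}(t)=\sum_{n\ge0} t^{n/2}/\Gamma(n/2+1)$ into even and odd indices identifies the even part as $\sum_{m\ge0} t^m/m! = e^t$, which gives $e^t \le E_{1/2}(t)$ by positivity; the matching upper bound follows from the closed form $E_{1/2}(t)=e^t\big(1+\mathrm{erf}(\sqrt t)\big)$, since $\mathrm{erf}(\sqrt t)\le 1$. The main obstacle is the kernel identification: making the Beta-function bookkeeping and the $\Gamma$-factors line up exactly with $E_\chi'$ is the heart of the proof, whereas the monotonicity and the remainder estimate are comparatively mechanical.
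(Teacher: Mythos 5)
The paper itself offers no proof of this proposition: it is recalled from the literature, cited as Lemma 7.1.1 of \cite{Henry}, so there is no internal argument to compare against. Your Picard-iteration proof --- iterating $u \le a + Bu$ by monotonicity of $B$, identifying the kernel of $B^n$ via the Beta integral as $\theta^{n\chi}(t-s)^{n\chi-1}/\Gamma(n\chi)$, summing the series to $\theta E_\chi'(\theta(t-s))$ by Tonelli, and killing the remainder using the growth of $\Gamma(N\chi)$ --- is correct, and it is in fact the classical argument given in the cited reference, so you have reconstructed exactly the proof the paper points to. One small imprecision: $u$ is only assumed locally integrable on the half-open interval $[0,T)$, so $\|u\|_{L^1(0,T)}$ in your remainder estimate may be infinite; since the conclusion is pointwise in $t$, the fix is simply to fix $t_0<T$ and run the estimate on $[0,t_0]$, where $u \in L^1$. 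Your verification of $e^t \le E_{1/2}(t) \le 2e^t$ via the even/odd split and $E_{1/2}(t)=e^t\bigl(1+\mathrm{erf}(\sqrt{t})\bigr)$ is also fine.
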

\begin{remm}
     A typical application of the above result is to obtain estimates for $\sup_{t \in [0,T]} u(t)$, as follows:
\begin{align*}
    u(t) &\le \sup_{t \in [0,T]} a(t) + \theta \int_0^t E_\chi^\prime(\theta(t-s))a(s)\, ds \\
    &\le \Big(\sup_{t \in [0,T]} a(t)\Big)(1+\int_0^t E_\chi^\prime(\theta(t-s))\, ds \\
    \Rightarrow \sup_{t \in [0,T]} u(t) &\le \Big(\sup_{t \in [0,T]} a(t)\Big)(1+E_\chi(\theta T)).
\end{align*}

\end{remm}
{\color{red}
	The next result is an application of the classical Borel-Cantelli's Lemma (Lemma 2.1 in \cite{Kloeden Neuenkirch}).  
\begin{prop} \label{borel cantelli}
Let $\rho > 0$ and $C(m) \in [0,\infty)$ for $m \geq1$. In addition, let $Z_N$, $N \in \mathbb{N}$, be a sequence of random variables such that
\begin{align*}
\left(\mathbb{E}\left|Z_N\right|^m\right)^{\frac{1}{m}} \leq C(m) N^{-\rho},
\end{align*}
for all $m \geq1$ and $N \in \mathbb{N}$. Then for all $\delta >0$, there exists a random variable $A_{\delta}$ such that almost surely
\begin{align*}
\left|Z_N\right| \leq A_{\delta}N^{-\rho + \delta}.
\end{align*}
Moreover, 
\begin{align*}
\mathbb{E}\left|A_{\delta}\right|^m < \infty,
\end{align*}
for all $m \geq1$.
\end{prop}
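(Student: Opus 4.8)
The plan is to exhibit an explicit candidate for $A_\delta$ and then convert the uniform moment hypothesis into a finite-moment bound for it. Concretely, I would set
\[ A_\delta := \sup_{N \ge 1} N^{\rho - \delta}\,|Z_N|. \]
With this definition the pointwise inequality $|Z_N| \le A_\delta\, N^{-\rho+\delta}$ holds for every $N$ by construction (on any $\omega$, including those where the supremum happens to be infinite), so the only genuine content of the statement is that $A_\delta$ has finite moments of every order; in particular this forces $A_\delta < \infty$ almost surely, which is what makes the displayed almost-sure bound meaningful.

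The key step is to estimate the moments of the supremum by passing to an $\ell^p$ sum. For any fixed $p \ge 1$, the elementary inequality $\sup_N a_N \le \big(\sum_N a_N^p\big)^{1/p}$ applied to $a_N = N^{\rho-\delta}|Z_N|$ gives
\[ A_\delta^p \le \sum_{N \ge 1} N^{(\rho-\delta)p}\,|Z_N|^p. \]
Taking expectations, interchanging sum and expectation by Tonelli (all terms are nonnegative), and then inserting the hypothesis $\mathbb{E}|Z_N|^p \le C(p)^p N^{-\rho p}$, I would obtain
\[ \mathbb{E}\big[A_\delta^p\big] \le \sum_{N \ge 1} N^{(\rho-\delta)p}\,\mathbb{E}|Z_N|^p \le C(p)^p \sum_{N \ge 1} N^{-\delta p}. \]
The remaining series $\sum_N N^{-\delta p}$ converges precisely when $\delta p > 1$, that is for all $p > 1/\delta$; hence $\mathbb{E}[A_\delta^p] < \infty$ for every such $p$, and in particular $A_\delta < \infty$ almost surely.

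Finally I would upgrade this to moments of all orders: by the Lyapunov (or Jensen) inequality the map $m \mapsto \big(\mathbb{E}[A_\delta^m]\big)^{1/m}$ is nondecreasing, so finiteness for all $p > 1/\delta$ yields $\mathbb{E}[A_\delta^m] < \infty$ for every $m \ge 1$, completing the argument.

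The step I expect to require the most care is simply the choice of exponent $p$: one must take $p$ strictly larger than $1/\delta$ so that the comparison series $\sum_N N^{-\delta p}$ is summable, and then invoke the monotonicity of $L^p$-norms to recover the full range $m \ge 1$. An alternative route, matching the name of the proposition, is a direct Borel--Cantelli argument: Markov's inequality gives $\mathbb{P}(|Z_N| > N^{-\rho+\delta}) \le C(m)^m N^{-\delta m}$, which is summable once $m > 1/\delta$, so almost surely $|Z_N| \le N^{-\rho+\delta}$ for all large $N$; this delivers the almost-sure bound, but the explicit weighted supremum above is the cleaner path to the finite-moment conclusion.
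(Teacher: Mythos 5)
Your proof is correct, and it coincides with the paper's: the paper offers no argument of its own but defers to Lemma 2.1 of Kloeden--Neuenkirch, whose proof is exactly your weighted supremum $A_\delta=\sup_{N\ge1}N^{\rho-\delta}|Z_N|$, the Tonelli/$\ell^p$ domination $\mathbb{E}[A_\delta^p]\le C(p)^p\sum_N N^{-\delta p}<\infty$ for $p>1/\delta$, and monotonicity of $L^p$-norms to cover all $m\ge1$. Your handling of the one delicate point (choosing $p>1/\delta$, then invoking Lyapunov's inequality) is exactly right, so nothing is missing.
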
	
	}

\medskip

\section*{Declarations}

Conflict of Interest Data Sharing not applicable to this article as no datasets were generated or analysed during the current study. 

\section*{Acknowledgements}

Author Christian Olivera is partially supported by FAPESP by the grant  2020/04426-6,  by FAPESP-ANR by the grant Stochastic and Deterministic Analysis for Irregular Models 2022/03379-0 and  CNPq by the grant 422145/2023-8. 
Josu\'e Knorst is partially supported by FAPESP by the grant FAPESP 2022/13413-0 and Alexandre B. de Souza  was financed in part by the Coordenação de Aperfeiçoamento de Pessoal de Nível Superior - Brasil (CAPES) - Finance Code 001.

\bibliographystyle{siam}

\end{document}